\newcommand{\normmm}[1]{{\left\vert\kern-0.25ex\left\vert\kern-0.25ex\left\vert #1 
   \right\vert\kern-0.25ex\right\vert\kern-0.25ex\right\vert}}
  \newcounter{mnote}
  \let\oldmarginpar\marginpar
    \renewcommand\marginpar[1]{\-\oldmarginpar[\raggedleft\footnotesize #1]%
    {\raggedright\footnotesize #1}}
\newtheorem{theorem}{Theorem}[section]
\newtheorem{lemma}[theorem]{Lemma}
\newtheorem{example}[theorem]{Example}
\newcommand{\dd}{\,{\rm d}}
\newcommand{\curl}{\operatorname{curl}}
\renewcommand{\div}{\operatorname{div}}
\numberwithin{equation}{section}
\begin{document}
\title[Morley-Wang-Xu Element Method]{A Morley-Wang-Xu element method for a fourth order elliptic singular perturbation problem}

\author{Xuehai Huang}%
\address{School of Mathematics, Shanghai University of Finance and Economics, Shanghai 200433, China}%
\email{huang.xuehai@sufe.edu.cn}%
\author{Yuling Shi}%
\address{School of Mathematics, Shanghai University of Finance and Economics, Shanghai 200433, China}%
\email{shiyuling@163.sufe.edu.cn}%
\author{Wenqing Wang}%
\address{Department of Basic Teaching, Wenzhou Business College, Wenzhou 325035, China}%
\email{wangwenqing81@hotmail.com}%

\thanks{This work was supported by the National Natural Science Foundation of China Project (Grant Nos. 11771338 and 12071289), and the Fundamental Research Funds for the Central Universities (Grant No. 2019110066).}

\makeatletter
\@namedef{subjclassname@2020}{\textup{2020} Mathematics Subject Classification}
\makeatother

\subjclass[2020]{
  65N12;   
  65N22;   
  65N30;   
  65F08;   
}
\keywords{Fourth order elliptic singular perturbation problem, Morley-Wang-Xu element, Decoupling, Boundary layers, Fast solver}

\begin{abstract}
  A Morley-Wang-Xu (MWX) element method with a simply modified right hand side is proposed for a fourth order elliptic singular perturbation problem, in which the discrete bilinear form is standard as usual nonconforming finite element methods. The sharp error analysis is given for this MWX element method. And the Nitsche's technique is applied to the MXW element method to achieve the optimal convergence rate in the case of the boundary layers.
  An important feature of the MWX element method is solver-friendly. Based on a discrete Stokes complex in two dimensions, the MWX element method is decoupled into one Lagrange element method of Poisson equation,
  two Morley element methods of Poisson equation and one nonconforming $P_1$-$P_0$ element method of Brinkman problem, which implies efficient and robust solvers for the MWX element method.
  Some numerical examples are provided to verify the theoretical results.
\end{abstract}

\maketitle


\section{Introduction}


In this paper, we shall apply the Morley-Wang-Xu (MWX) element \cite{Morley1968, WangXu2006} to discretize the fourth order elliptic singular perturbation problem
\begin{equation}\label{fourthorderpertub}
  \begin{cases}
    \varepsilon^2\Delta^{2}u-\Delta u=f \quad\;\; \textrm{in}~\Omega, \\
    u=\partial_nu=0 \quad\quad\quad \textrm{on}~\partial\Omega,
  \end{cases}
\end{equation}
where $\Omega\subset \mathbb{R}^d$ with $d\geq2$ is a convex and bounded
polytope,  $f\in L^2(\Omega)$, $n$ is the unit outward normal to $\partial\Omega$, and $\varepsilon$ is a real small and positive parameter.

The MWX element is the simplest finite element for fourth order problems, as it has the fewest number of degrees of freedom on each element. The generalization of the  MWX element to any star-shaped polytope in any dimension is recently developed in the context of the virtual element in \cite{ChenHuang2020}.
However it is divergent to discretize problem \eqref{fourthorderpertub} by the MWX element in the following standard way when $\varepsilon$ is very close to $0$ \cite{NilssenTaiWinther2001, Wang2001}: find $u_{h0} \in V_{h0} $ such that
\begin{equation}\label{mmwx0intro0}
  \varepsilon^2 (\nabla_h^{2}u_{h0}, \nabla_h^{2}v_h)+(\nabla_h u_{h0}, \nabla_h v_h)=(f, v_h) \quad \forall~v_h \in V_{h0},
\end{equation}
where $V_{h0}$ is the global MWX element space. To this end, a modified MWX element method was advanced in \cite{WangXuHu2006, WangMeng2007} to deal with this divergence by replacing $(\nabla_h u_{h0}, \nabla_h v_h)$ with $(\nabla\Pi_h u_{h0}, \nabla\Pi_h v_h)$, where $\Pi_h$ is an interpolation operator from $V_{h0}$ to some lower-order $H^1$-conforming finite element space.
Instead of introducing the interpolation operator, the combination of the MWX element and the interior penalty discontinuous Galerkin formulation \cite{Arnold1982} is proposed to discretize problem \eqref{fourthorderpertub} in \cite{WangHuangTangZhou2018}.
Both modified Morley element methods in \cite{WangXuHu2006, WangMeng2007, WangHuangTangZhou2018} uniformly converge with respect to the parameter~$\varepsilon$.

Apart from the MWX element, there are many other $H^2$-nonconforming elements constructed to design robust numerical methods for problem \eqref{fourthorderpertub}, including $C^0$ $H^2$-nonconforming elements in \cite{NilssenTaiWinther2001, TaiWinther2006, GuzmanLeykekhmanNeilan2012, WangWuXie2013, ChenChenQiao2013, ChenChen2014, XieShiLi2010, ChenChenXiao2014, WangShiXu2007, WangShiXu2007a} and fully $H^2$-nonconforming elements in \cite{ChenZhaoShi2005, ChenLiuQiao2010, WangWuXie2013}.
And a $C^0$ interior penalty discontinuous Galerkin (IPDG) method with the Lagrange element space was devised for problem~\eqref{fourthorderpertub} in \cite{BrennerNeilan2011,FranzRoosWachtel2014}.
We refer to \cite{Semper1992, ArgyrisFriedScharpf1968, Zhang2009a} for the $H^2$-conforming finite element methods of problem \eqref{fourthorderpertub}, which usually suffer from large number of degrees of freedom.

To design a simple finite element method for problem \eqref{fourthorderpertub}, we still employ the MWX element space and the standard discrete bilinear formulation as the left hand side of the discrete method \eqref{mmwx0intro0} in this paper. We simply replace the right hand side $(f, v_h)$ by $(f, P_hv_h)$, where $P_h$ is the $H^1$-orthogonal projector onto the $H^1$-conforming $\ell$th order Lagrange element space $W_h$ with $\ell=1,2$. In a word, we propose the following robust MWX element method for problem~\eqref{fourthorderpertub}:
find $u_{h0} \in V_{h0} $ such that
\begin{equation}\label{mmwx0intro}
  \varepsilon^2 (\nabla_h^{2}u_{h0}, \nabla_h^{2}v_h)+(\nabla_h u_{h0}, \nabla_h v_h)=(f, P_hv_h) \quad \forall~v_h \in V_{h0}.
\end{equation}
The stiffness matrix of the discrete method~\eqref{mmwx0intro} can be assembled in a standard way, which is sparser than that of the discontinuous Galerkin methods, such as \cite{WangHuangTangZhou2018}.
After establishing the interpolation error estimate and consistency error estimate, the optimal convergence rate $O(h)$ of the energy error is achieved. And the discrete method~\eqref{mmwx0intro} possesses the sharp and uniform convergence rate $O(h^{1/2})$ of the energy error in consideration of the boundary layers.

An important feature of the discrete method~\eqref{mmwx0intro} is solver-friendly. First the discrete method~\eqref{mmwx0intro} is equivalent to find
$w_h\in W_h$ and $u_{h0}\in V_{h0}$ such that
\begin{align}
  (\nabla w_h, \nabla \chi_h)                   & =(f, \chi_h)  \quad\quad\quad\quad\; \forall~\chi_h\in W_h, \label{intro:mmwx0equiv1}  \\
  \varepsilon^2 a_h(u_{h0},v_h)+b_h(u_{h0},v_h) & =(\nabla w_h, \nabla_h v_h) \quad\;\; \forall~v_h\in V_{h0}. \label{intro:mmwx0equiv2}
\end{align}
Especially in two dimensions, thanks to the relationship between the Morley element space $V_{h0}$ and the vectorial nonconforming $P_1$ element space $V_{h0}^{CR}$ (cf. \cite[Theorem 4.1]{FalkMorley1990}),
the discrete method \eqref{intro:mmwx0equiv2} can be decoupled into
two Morley element methods of Poisson equation and one nonconforming $P_1$-$P_0$ element method of Brinkman problem, i.e.,
find $(z_h, \phi_h, p_h, w_h)\in V_{h0}\times V_{h0}^{CR}\times \mathcal Q_{h}\times V_{h0}$ such that
\begin{subequations}
  \begin{align}
    (\curl_h z_h, \curl_h v_h)                                                           & =(\nabla w_h, \nabla_h v_h)\quad\, \forall~v_h\in V_{h0},  \label{intro:4thpurbmfem0a}          \\
    (\phi_h, \psi_h)+\varepsilon^2(\nabla_h\phi_h, \nabla_h\psi_h) + (\div_h\psi_h, p_h) & =(\curl_h z_h, \psi_h)
    \quad \forall~\psi_h\in  V_{h0}^{CR}, \label{intro:4thpurbmfem0b}                                                                                                                      \\
    (\div_h\phi_h, q_h)                                                                  & = 0\quad\quad\quad\quad\quad\quad\, \forall~q_h\in \mathcal Q_{h},  \label{intro:4thpurbmfem0c} \\
    (\curl_h u_{h0}, \curl_h \chi_h)                                                     & = (\phi_h, \curl_h\chi_h) \quad \forall~\chi_h\in V_{h0}. \label{intro:4thpurbmfem0d}
  \end{align}
\end{subequations}
When $\varepsilon$ is small, the discrete method~\eqref{intro:mmwx0equiv2} can be easily solved by the the conjugate gradient (CG) method with the auxiliary space preconditioner \cite{Xu1996}.
The decoupling \eqref{intro:4thpurbmfem0a}-\eqref{intro:4thpurbmfem0d} will induce efficient and robust solvers for the MWX element method~\eqref{mmwx0intro} for large $\varepsilon$.
The Lagrange element method of Poisson equation~\eqref{intro:mmwx0equiv1}, and
the Morley element methods of Poisson equation~\eqref{intro:4thpurbmfem0a} and \eqref{intro:4thpurbmfem0d} can be solved by the CG method with the auxiliary space preconditioner,
in which the $H^1$ conforming linear element discretization on the same mesh for the
Poisson equation can be adopted as the auxiliary problem.
And the algebraic multigrid (AMG) method is used to solve the auxiliary problem.
As for the nonconforming $P_1$-$P_0$ element methods of Brinkman problem~\eqref{intro:4thpurbmfem0b}-\eqref{intro:4thpurbmfem0c}, we can use the block-diagonal preconditioner in \cite{OlshanskiiPetersReusken2006,MardalWinther2004,CahouetChabard1988} or the approximate block-factorization preconditioner in \cite{ChenHuHuang2018}, which are robust with respect to the mesh size $h$.
The resulting fast solver of the MWX element method~\eqref{mmwx0intro} also works for the shape-regular unstructured meshes.

When $\varepsilon$ is close to zero, the uniform convergence rate $O(h^{1/2})$ of the energy error of the discrete method~\eqref{mmwx0intro}
is sharp but not optimal, where the optimal convergence rate should be $O(h^{\ell})$ for $\ell=1,2$.
To promote the convergence rate in the case of the boundary layers,  we apply the Nitsche's technique in \cite{GuzmanLeykekhmanNeilan2012} to the discrete method~\eqref{mmwx0intro}, i.e. impose the boundary condition $\partial_nu=0$ weakly by the penalty technique \cite{Arnold1982}.
The optimal error analysis is present for the resulting discrete method, whose convergence rate is uniform with respect to the perturbation parameter $\varepsilon$ when $\varepsilon$ approaches zero.
Similarly, as \eqref{intro:4thpurbmfem0a}-\eqref{intro:4thpurbmfem0d}, the discrete method with Nitsche's technique on the boundary can also be decoupled into one Lagrange element method of Poisson equation,
two Morley element methods of Poisson equation and one nonconforming $P_1$-$P_0$ element method with Nitsche's technique of Brinkman problem, which is solver-friendly.

The rest of this paper is organized as follows. Some notations, connection operators and interpolation operators are shown in section 2. In section 3, we develop and analyze the MWX element method, and the MWX element method with Nitsche's technique is devised and analyzed in section 4. Section~5 focuses on the equivalent decoupling of the MWX element methods. Finally, some numerical results are given in section 6  to confirm the theoretical results.


\section{Connection Operators and Interpolation Operators}
\subsection{Notation}
Given a bounded domain $G\subset\mathbb{R}^{d}$ and a
non-negative integer $m$, let $H^m(G)$ be the usual Sobolev space of functions
on $G$, and $H^m(G;\mathbb X)$
the Sobolev space of functions taking values in the finite-dimensional vector space $\mathbb X$ for $\mathbb X$ being $\mathbb R^d$ or $\mathbb M$, where $\mathbb M$ is the space of all $d\times d$ tensors. The corresponding norm and semi-norm are denoted respectively by
$\Vert\cdot\Vert_{m,G}$ and $|\cdot|_{m,G}$.   Let $(\cdot, \cdot)_G$ be the standard inner product on $L^2(G)$ or $L^2(G;\mathbb X)$.
If $G$ is $\Omega$, we abbreviate
$\Vert\cdot\Vert_{m,G}$, $|\cdot|_{m,G}$ and $(\cdot, \cdot)_G$ by $\Vert\cdot\Vert_{m}$, $|\cdot|_{m}$ and $(\cdot, \cdot)$,
respectively. Let $H_0^m(G)$ be the closure of $C_{0}^{\infty}(G)$ with
respect to the norm $\Vert\cdot\Vert_{m,G}$.
Let $\mathbb P_m(G)$ stand for the set of all
polynomials in $G$ with the total degree no more than $m$,
and $\mathbb P_m(G;\mathbb R^d)$ the vectorial version of $\mathbb P_m(G)$.
As usual, $|G|$ denotes the measure of a given open set $G$.
For any finite set $\mathcal{S}$, denote by $\#\mathcal{S}$ the cardinality of $\mathcal{S}$.

We partition the domain $\Omega$ into a family of shape regular simplicial grids $\mathcal {T}_h$  (cf.\ \cite{BrennerScott2008,Ciarlet1978}) with $h:=\max\limits_{K\in \mathcal {T}_h}h_K$
and $h_K:=\mbox{diam}(K)$.
Let $\mathcal{F}_h$ be the union of all $d-1$ dimensional faces
of $\mathcal {T}_h$, $\mathcal{F}^i_h$ the union of all
interior $d-1$ dimensional faces of the triangulation $\mathcal {T}_h$, and $\mathcal{F}^{\partial}_h:=\mathcal{F}_h\backslash\mathcal{F}^i_h$. Similarly, let $\mathcal{E}_h$ be the union of all $d-2$ dimensional faces
of $\mathcal {T}_h$, $\mathcal{E}^i_h$ the union of all
interior $d-2$ dimensional faces of the triangulation $\mathcal {T}_h$, and $\mathcal{E}^{\partial}_h:=\mathcal{E}_h\backslash\mathcal{E}^i_h$.
Set 
\[
  \mathcal{F}^i(K):=\{F\in\mathcal{F}^i_h: F\subset\partial K\}, \quad \mathcal{F}^{\partial}(K):=\{F\in\mathcal{F}_h^{\partial}: F\subset\partial K\},
\]
\[
  \mathcal{E}(K):=\{e\in\mathcal{E}_h: e\subset\partial K\}.
\]
For each $K\in\mathcal{T}_h$, denote by $n_K$ the
unit outward normal to $\partial K$. Without causing any confusion, we will abbreviate $n_K$ as $n$ for simplicity.
For each $F\in\mathcal{F}_h$,
denote by $h_F$ its diameter and fix a unit normal vector $n_F$ such that $n_F=n_K$ if $F\in\mathcal{F}^{\partial}(K)$.
In two dimensions, i.e. $d=2$, we use $t_F$ to denote the unit tangential vector of $F$ if $F\in\mathcal{F}^{\partial}_h$, and abbreviate it as $t$ for simplicity.
For $s\geq1$, define
\[
  H^s(\mathcal T_h):=\{v\in L^2(\Omega): v|_K\in H^s(K)\quad\forall~K\in\mathcal T_h\}.
\]
For any $v\in H^s(\mathcal T_h)$, define the broken $H^s$ norm and seminorm
\begin{gather*}
  \|v\|_{s,h}^2:=\sum_{K \in \mathcal {T}_h} \|v\|_{s, K}^2, \quad |v|_{s,h}^2:=\sum_{K \in \mathcal {T}_h} |v|_{s, K}^2.
\end{gather*}
For any $v\in H^2(\mathcal T_h)$, introduce some other discrete norms
\begin{gather*}
  \interleave v\interleave_{2,h}^2:= |v|_{2, h}^2+\sum_{F \in \mathcal{F}_{h}^{\partial}} h_F^{-1} \| \partial_nv\|_{0,F}^2 ,\\
  \| v \|_{\varepsilon ,h}^2:=\varepsilon^2 |v|_{2,h}^2+|v|_{1,h}^2, \quad \interleave v\interleave_{\varepsilon ,h}^2:=\varepsilon^2\interleave v\interleave_{2,h}^2+|v|_{1,h}^2 .
\end{gather*}
For any face $F\in\mathcal{F}_h$, set
\[
  \partial^{-1}F :=\{K\in\mathcal{T}_h: F\subset \partial K\}, \quad  \omega_F:=\textrm{interior}\left(\bigcup_{K\in\partial^{-1}F}\overline{K}\right).
\]
For any simplex $K\in\mathcal{T}_h$, denote
\[
  \mathcal T_K:=\{K'\in\mathcal T_h: \overline{K'}\cap\overline{K}\neq\varnothing\}, \quad  \omega_K:=\textrm{interior}\left(\bigcup_{K'\in\mathcal T_K}\overline{K'}\right),
\]
\[
  \omega_{K}^2:=\textrm{interior}\left(\bigcup\{\overline{\omega_{K'}}\in\mathcal T_h: \overline{K'}\cap\overline{K}\neq\varnothing\}\right).
\]
Discrete differential operators $\nabla_h$, $\curl_h$ and $\div_h$ are defined as
the elementwise counterparts of $\nabla$, $\curl$ and $\div$ associated with $\mathcal{T}_h$ respectively.
Throughout this paper, we also use
``$\lesssim\cdots $" to mean that ``$\leq C\cdots$", where
$C$ is a generic positive constant independent of $h$ and the
parameter $\varepsilon$,
which may take different values at different appearances.

Moreover, we introduce averages and jumps on $d-1$ dimensional faces as in \cite{HuangHuangHan2010}.
Consider two adjacent simplices $K^+$ and $K^-$ sharing an interior face $F$.
Denote by $n^+$ and $n^-$ the unit outward normals
to the common face $F$ of the simplices $K^+$ and $K^-$, respectively.
For a scalar-valued or vector-valued function $v$, write $v^+:=v|_{K^+}$ and $v^-
  :=v|_{K^-}$.   Then define the average and jump on $F$ as
follows:
\[
  \{v\}:=\frac{1}{2}(v^++v^-),
  \quad  \llbracket v\rrbracket:=v^+n_F\cdot n^++v^-n_F\cdot n^-.
\]
On a face $F$ lying on the boundary $\partial\Omega$, the above terms are
defined by
\[
  \{v\}:=v, \quad \llbracket v\rrbracket
  :=vn_F\cdot n.
\]

Associated with the partition $\mathcal{T}_h$, the global Morley-Wang-Xu (MWX) element space $\widetilde V_h$ consists of all piecewise quadratic functions
on $\mathcal{T}_h$ such that, their integral average over each $(d-2)$-dimensional face
of elements in $\mathcal{T}_h$ are continuous, and their normal derivatives are continuous at
the barycentric point of each $(d-1)$-dimensional face of elements in $\mathcal{T}_h$ (cf. \cite{WangXu2006, Morley1968, WangXu2013}).
And
define
\[
  V_{h} :=\left\{v\in \widetilde V_{h}: \int_{e}v\dd s
  =0\quad \forall
  \,e\in\mathcal{E}^{\partial}_h\right\},
\]
\[
  V_{h0} :=\left\{v\in V_{h}: \int_{F}\partial_nv\dd s
  =0\quad \forall
  \,F\in\mathcal{F}^{\partial}_h\right\}.
\]
Notice that we do not impose the boundary condition $\int_{F}\partial_nv\dd s=0$ in the finite element space $V_h$.
Due to Lemma~4 in \cite{WangXu2006}, we have
\begin{equation}\label{eq:mwxweakcontinuity}
  \int_{F}\llbracket \nabla v_h\rrbracket\dd s=0\quad\forall~v_h\in V_h, F\in\mathcal F_h^i,
\end{equation}
\begin{equation}\label{eq:mwxweakcontinuity-1}
  \int_{F}\llbracket \nabla_F v_h\rrbracket\dd s=0\quad\forall~v_h\in V_h, F\in\mathcal F_h,
\end{equation}
\begin{equation}\label{eq:mwxweakcontinuity0}
  \int_{F}\llbracket \nabla v_h\rrbracket\dd s=0\quad\forall~v_h\in V_{h0}, F\in\mathcal F_h,
\end{equation}
where the surface gradient $\nabla_F v_h:=\nabla v_h - \partial_{n_F}v_hn_F$.

\subsection{Connection operators}
In this subsection we will introduce some operators to connect the Lagrange element space and the MWX element space for analysis.
Let the Lagrange element space
\[
  W_{h} :=\left\{v\in H_0^{1}(\Omega): v|_K\in P_{\ell}(K)
  \quad \forall
  \,K\in\mathcal{T}_h\right\}
\]
with $\ell=1$ or $2$.
Define a connection operator $E_h^L: V_h\to W_h$ with $\ell=2$ as follows:
Given $v_h\in V_h$, $E_h^Lv_h\in W_h$ is determined by
\[
  N(E_h^Lv_h):=\frac{1}{\#\mathcal T_N}\sum_{K\in\mathcal T_N}N(v_h|_K)
\]
for each interior degree of freedom $N$ of the space $W_h$,
where $\mathcal T_N\subset \mathcal T_h$ denotes the set of simplices sharing the degree of freedom $N$.
By the weak continuity of $V_{h0}$ and $V_h$ and the techniques adopted in \cite{Wang2001,BrennerSung2005}, we have for any $s=1, 2$, $0\leq m\leq s$ and $j=0,1,2$ that
\begin{equation*}
  |v_h-E_h^Lv_h|_{m,K}\lesssim h_K^{s-m}|v_h|_{s, \omega_K}\quad\forall~v_h\in V_{h0},
\end{equation*}
\begin{equation*}
  \|v_h-E_h^Lv_h\|_{0,K}+h_K|v_h-E_h^Lv_h|_{1,K}\lesssim h_K|v_h|_{1, \omega_K}\quad\forall~v_h\in V_{h},
\end{equation*}
\begin{equation*}
  |v_h-E_h^Lv_h|_{j,K}\lesssim h_K^{2-j}\Big(|v_h|_{2, \omega_K}+\sum_{K'\in\mathcal T_K}\sum_{F \in \mathcal{F}^{\partial}(K')} h_F^{-1/2} \| \partial_nv_h\|_{0,F}\Big)\;\;\forall~v_h\in V_{h}
\end{equation*}
for each $K\in\mathcal T_h$.
Then we get
\begin{equation}\label{eq:EhLerror1}
  |v_h-E_h^Lv_h|_{1,h}\lesssim \min\{|v_h|_{1,h},h^{1/2}|v_h|_{1,h}^{1/2}|v_h|_{2,h}^{1/2}, h|v_h|_{2,h}\}\quad\forall~v_h\in V_{h0},
\end{equation}
\begin{equation}\label{eq:EhLerror2}
  |v_h-E_h^Lv_h|_{1,h}\lesssim \min\{|v_h|_{1,h},h^{1/2}|v_h|_{1,h}^{1/2}\interleave v\interleave_{2,h}^{1/2}, h\interleave v\interleave_{2,h}\}\quad\forall~v_h\in V_h.
\end{equation}

To define interpolation operators later, we also need
another two connection operators $E_h: W_h\to V_{h}$ and $E_{h0}: W_h\to V_{h0}$. For any $v_h\in W_h$, $E_hv_h\in V_{h}$ is determined by
\[
  \int_e E_hv_h\dd s=\int_ev_h\dd s\quad\forall~e\in\mathcal E_h^i,
\]
\[
  \int_F\partial_{n_F}( E_hv_h)\dd s=\int_F\{\partial_{n_F}v_h\}\dd s\quad\forall~F\in\mathcal F_h.
\]
And $E_{h0}v_h\in V_{h0}$ is determined by
\[
  \int_e E_{h0}v_h\dd s=\int_ev_h\dd s\quad\forall~e\in\mathcal E_h^i,
\]
\[
  \int_F\partial_{n_F}( E_{h0}v_h)\dd s=\int_F\{\partial_{n_F}v_h\}\dd s\quad\forall~F\in\mathcal F_h^i.
\]

\subsection{Interpolation operators}

Let $I_h^{SZ}$ be the Scott-Zhang interpolation operator \cite{ScottZhang1990} from $H_0^1(\Omega)$ onto $W_h$ with $\ell=2$.
For any $1\leq s\leq 3$ and $0\leq m\leq s$, it holds (cf. \cite[(4.3)]{ScottZhang1990})
\begin{equation}\label{eq:szerror}
  |v-I_h^{SZ}v|_{m,K}\lesssim h_K^{s-m}|v|_{s, \omega_K}\quad\forall~v\in H_0^1(\Omega)\cap H^s(\Omega),\; K\in\mathcal T_h.
\end{equation}
Then define two quasi-interpolation operators $I_h: H_0^1(\Omega)\to V_{h}$ and $I_{h0}: H_0^1(\Omega)\to V_{h0}$ as
\[
  I_h:=E_hI_h^{SZ},
  \quad
  I_{h0}:=E_{h0}I_h^{SZ}.
\]
Next we will derive the error estimates of the interpolation operators $I_h$ and $I_{h0}$ following the argument in \cite{GuzmanLeykekhmanNeilan2012}.
\begin{lemma}
  Let $2\leq s\leq 3$ and $0\leq m\leq s$. We have
  \begin{equation}\label{eq:Ih0Error1}
    |v-I_{h0}v|_{m,h}\lesssim h^{s-m}|v|_s\quad\forall~v\in H^s(\Omega)\cap H_0^2(\Omega),
  \end{equation}
  \begin{equation}\label{eq:Ih0Error2}
    |v-I_{h0}v|_{1, h}^2\lesssim h|v|_1|v|_2\quad\forall~v\in H_0^2(\Omega),
  \end{equation}
  \begin{equation}\label{eq:IhError1}
    |v-I_hv|_{m,h}\lesssim h^{s-m}|v|_s\quad\forall~v\in H^s(\Omega)\cap H_0^1(\Omega),
  \end{equation}
  \begin{equation}\label{eq:IhError2}
    |v-I_hv|_{1, h}^2\lesssim h|v|_1|v|_2\quad\forall~v\in H^2(\Omega)\cap H_0^1(\Omega),
  \end{equation}
  \begin{equation}\label{eq:IhError21}
    |v-I_hv|_{1, h}\lesssim |v|_1\quad\forall~v\in H_0^1(\Omega).
  \end{equation}
\end{lemma}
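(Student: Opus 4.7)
The plan is to reduce every estimate to the Scott--Zhang bound \eqref{eq:szerror} by triangle inequality through the intermediate function $I_h^{SZ}v$:
\begin{equation*}
|v-I_{h0}v|_{m,h}\le |v-I_h^{SZ}v|_{m,h}+|I_h^{SZ}v-E_{h0}I_h^{SZ}v|_{m,h},
\end{equation*}
and analogously for $I_h=E_hI_h^{SZ}$. The first term is handled directly by \eqref{eq:szerror}, so all the work lies in bounding the local discrepancy $w_h-E_{h0}w_h$ (respectively $w_h-E_hw_h$) for $w_h=I_h^{SZ}v\in W_h$.

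First I would identify the MWX degrees of freedom of $w_h-E_{h0}w_h$ element by element. Because $w_h\in W_h\subset H^1_0(\Omega)$ is continuous with zero boundary trace, all edge DOFs $\int_e(w_h-E_{h0}w_h)$ vanish by the definition of $E_{h0}$ (and by $\int_ew_h=0$ for $e\in\mathcal E_h^{\partial}$); only the face DOFs survive: on an interior face $F=\partial K\cap\partial K'$ they equal $\pm\tfrac12\int_F\llbracket\partial_{n_F}w_h\rrbracket$, while on a boundary face $F\in\mathcal F^{\partial}(K)$ they equal $\int_F\partial_n w_h$. Since $(w_h-E_{h0}w_h)|_K\in P_2(K)$, a Bramble--Hilbert / scaling argument on the reference simplex then yields
\begin{equation*}
|w_h-E_{h0}w_h|_{m,K}^2\lesssim h_K^{3-2m}\sum_{F\in\mathcal F^i(K)}\|\llbracket\partial_{n_F}w_h\rrbracket\|_{0,F}^2+h_K^{3-2m}\sum_{F\in\mathcal F^{\partial}(K)}\|\partial_n w_h\|_{0,F}^2,
\end{equation*}
and the analogous identity for $E_hw_h$ without the boundary-face terms.

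Next I would control each face contribution by inserting the smooth function $v$: for $v\in H^s(\Omega)$ with $s\ge 2$ we have $\llbracket\partial_{n_F}v\rrbracket=0$, whence $\llbracket\partial_{n_F}w_h\rrbracket=\llbracket\partial_{n_F}(v-w_h)\rrbracket$, and the trace inequality combined with \eqref{eq:szerror} delivers $\|\llbracket\partial_{n_F}w_h\rrbracket\|_{0,F}^2\lesssim h_F^{2s-3}|v|_{s,\omega_F}^2$; the boundary-face contributions for $I_{h0}$ are treated the same way, using $\partial_nv|_{\partial\Omega}=0$ from $v\in H_0^2$. Summing over $K$ produces \eqref{eq:Ih0Error1} and \eqref{eq:IhError1} for all admissible $m$ and $s$. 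For \eqref{eq:Ih0Error2} and \eqref{eq:IhError2} under the weaker regularity $v\in H_0^2$ (respectively $v\in H^2\cap H^1_0$), the same DOF identity together with the polynomial inverse-trace estimate $h_F\|\partial_{n_F}w_h\|_{0,F}^2\lesssim |w_h|_{1,\omega_F}^2$ also yields the stability bound $|v-I_{h0}v|_{1,h}\lesssim |v|_1$; combined with $|v-I_{h0}v|_{1,h}\lesssim h|v|_2$ already obtained above, the geometric mean produces $|v-I_{h0}v|_{1,h}^2\lesssim h|v|_1|v|_2$. The stability estimate \eqref{eq:IhError21} for $v\in H^1_0$ follows by exactly the same argument applied to $I_h$.

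The hard part will be setting up the DOF/scaling identity for $|w_h-E_{h0}w_h|_{m,K}$ in the first place: the excerpt supplies estimates for the opposite-direction operator $E_h^L$ in \eqref{eq:EhLerror1}--\eqref{eq:EhLerror2} but nothing directly for $E_h$ or $E_{h0}$, so the bound has to be built from scratch, with particular care at boundary elements where the constraint $\int_F\partial_n E_{h0}w_h=0$ on $F\in\mathcal F^{\partial}_h$ activates the extra non-jump term that is absent from the analysis of $E_h$. Once that local identity is in hand, every assertion in the lemma reduces to \eqref{eq:szerror} and standard trace or inverse inequalities, and no further obstacle is anticipated.
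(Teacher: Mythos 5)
Your proposal is correct and follows essentially the same route as the paper: triangle inequality through $I_h^{SZ}v$, identification of the MWX degrees of freedom of $I_h^{SZ}v-E_hI_h^{SZ}v$ (vanishing edge DOFs, face DOFs given by normal-derivative jump integrals, with the extra boundary-face term for $E_{h0}$), a scaling/inverse-inequality argument, and insertion of $v$ into the jumps followed by the trace inequality and \eqref{eq:szerror}. The only minor deviation is in the product estimates \eqref{eq:Ih0Error2} and \eqref{eq:IhError2}: the paper applies the multiplicative trace inequality $\|w\|_{0,\partial\hat K}^2\lesssim\|w\|_{0,\hat K}\|w\|_{1,\hat K}$ locally to $v-I_h^{SZ}v$, whereas you take the geometric mean of the global stability bound and the $O(h)$ bound, which is equally valid and yields the same conclusion.
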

\begin{proof}
  We only prove the inequalities \eqref{eq:IhError1}-\eqref{eq:IhError2}. The inequalities \eqref{eq:Ih0Error1}-\eqref{eq:Ih0Error2} and~\eqref{eq:IhError21} can be achieved by the same argument.
  Take any $K\in\mathcal T_h$.
  By the definition of $E_h$, we have
  \[
    \int_e(I_h^{SZ}v-E_hI_h^{SZ}v)|_K\dd s=0\quad\forall~e\in\mathcal E(K),
  \]
  \[
    \int_F\partial_{n_F}((I_h^{SZ}v-E_hI_h^{SZ}v)|_K)\dd s=\frac{n_F\cdot n_K}{2}\int_F\llbracket\partial_{n_F}(I_h^{SZ}v)\rrbracket\dd s\quad\forall~F\in\mathcal F^i(K),
  \]
  \[
    \int_F\partial_{n_F}((I_h^{SZ}v-E_hI_h^{SZ}v)|_K)\dd s=0\quad\forall~F\in\mathcal F^{\partial}(K).
  \]
  Applying the inverse inequality, scaling argument and Cauchy-Schwarz inequality, it follows
  \begin{align}
    |I_h^{SZ}v-E_hI_h^{SZ}v|_{m, K} & \leq  h_K^{-m}\|I_h^{SZ}v-E_hI_h^{SZ}v\|_{0, K} \notag                                                                                      \\
                                    & \lesssim h_K^{2-d/2-m}\sum_{F\in\mathcal F^i(K)}\left|\int_F\llbracket\partial_{n_F}(I_h^{SZ}v)\rrbracket\dd s\right| \notag                \\
                                    & \lesssim h_K^{3/2-m}\sum_{F\in\mathcal F^i(K)}\left\|\llbracket\partial_{n_F}(I_h^{SZ}v-v)\rrbracket\right\|_{0,F},\label{eq:temp201806091}
  \end{align}
  which together with the trace inequality and \eqref{eq:szerror} implies
  \begin{align*}
    |I_h^{SZ}v-E_hI_h^{SZ}v|_{m, K} & \lesssim h_K^{1-m} \sum_{F\in\mathcal F^i(K)}\sum_{K'\in\partial^{-1}F}(|v-I_h^{SZ}v|_{1,K'}+h_K|v-I_h^{SZ}v|_{2,K'}) \\
                                    & \lesssim h_K^{s-m}|v|_{s,\omega_{K}^2}.
  \end{align*}
  Employing \eqref{eq:szerror}, we get
  \[
    |v-I_hv|_{m, K}\leq |v-I_h^{SZ}v|_{m, K}+|I_h^{SZ}v-E_hI_h^{SZ}v|_{m, K}\lesssim h_K^{s-m}|v|_{s,\omega_{K}^2},
  \]
  which indicates \eqref{eq:IhError1}.

  Let $\hat K$ be the reference element of $K$, then it holds (cf. \cite[Theorem~1.5.1.10]{Grisvard1985})
  \begin{equation}\label{eq:temp201806104}
    \|w\|_{0,\partial \hat{K}}^2\lesssim \|w\|_{0,\hat K}\|w\|_{1,\hat K}\quad\forall~w\in H^1(\hat K).
  \end{equation}
  We obtain from \eqref{eq:temp201806091}, scaling argument and \eqref{eq:szerror} that
  \begin{align*}
     & \quad\;\;|I_h^{SZ}v-E_hI_h^{SZ}v|_{1, K}^2\lesssim h_K\sum_{F\in\mathcal F^i(K)}\left\|\llbracket\partial_{n_F}(I_h^{SZ}v-v)\rrbracket\right\|_{0,F}^2 \\
     & \lesssim \sum_{F\in\mathcal F^i(K)}\sum_{K'\in\partial^{-1}F}|v-I_h^{SZ}v|_{1,K'}(|v-I_h^{SZ}v|_{1,K'}+h_K|v-I_h^{SZ}v|_{2,K'})                        \\
     & \lesssim h_K|v|_{1,\omega_{K}^2}|v|_{2,\omega_{K}^2}.
  \end{align*}
  Finally we achieve \eqref{eq:IhError2} from the last inequality, triangle inequality and \eqref{eq:szerror}.
\end{proof}

Let $u^0\in H_0^1(\Omega)$ be the solution of the Poisson equation
\begin{equation}\label{poisson}
  \begin{cases}
    -\Delta u^0=f \quad\;\; \textrm{in}~\Omega, \\
    u^0=0 \quad\quad\quad\; \textrm{on}~\partial\Omega.
  \end{cases}
\end{equation}
Since the domain $\Omega$ is convex, we have the following regularities \cite{NilssenTaiWinther2001,GuzmanLeykekhmanNeilan2012,Grisvard1985}
\begin{equation}\label{eq:regularity1}
  |u|_2+\varepsilon |u|_3\lesssim \varepsilon^{-1/2}\|f\|_0,
\end{equation}
\begin{equation}\label{eq:regularity2}
  |u-u^0|_1\lesssim \varepsilon^{1/2}\|f\|_0,
\end{equation}
\begin{equation}\label{eq:regularity3}
  \|u^0\|_2\lesssim \|f\|_0.
\end{equation}

\begin{lemma}
  We have
  \begin{equation}\label{eq:IhError4}
    \interleave u-I_hu\interleave_{\varepsilon,h}\lesssim (\varepsilon h+ h^{2})|u|_3.
  \end{equation}
  If $u^0\in H_0^1(\Omega)\cap H^s(\Omega)$ with $2\leq s\leq 3$,  it holds
  \begin{equation}\label{eq:IhError3}
    \interleave u-I_hu\interleave_{\varepsilon,h}\lesssim \min\{\varepsilon^{1/2}, h^{1/2}\}\|f\|_0 +h^{s-1}|u^0|_{s}.
  \end{equation}
\end{lemma}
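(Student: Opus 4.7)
The plan is to prove \eqref{eq:IhError4} directly from \eqref{eq:IhError1} and to prove \eqref{eq:IhError3} by treating the three contributions $|u - I_h u|_{1, h}^2$, $\varepsilon^2 |u - I_h u|_{2, h}^2$, and $\varepsilon^2 \sum_{F \in \mathcal F_h^\partial} h_F^{-1} \|\partial_n(u - I_h u)\|_{0, F}^2$ to $\interleave u - I_h u\interleave_{\varepsilon, h}^2$ separately, interpolating between the $s = 2$ and $s = 3$ cases of \eqref{eq:IhError1} with the regularities \eqref{eq:regularity1}--\eqref{eq:regularity3} and the elementary inequality $\min\{a, b\} \leq \sqrt{ab}$.

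For \eqref{eq:IhError4}, \eqref{eq:IhError1} applied with $v = u \in H_0^2(\Omega) \cap H^3(\Omega)$, $s = 3$, and $m = 1, 2$ yields $|u - I_h u|_{m, h} \lesssim h^{3-m} |u|_3$. For the boundary trace, the standard trace inequality $\|\partial_n w\|_{0, F}^2 \lesssim h_K^{-1} |w|_{1, K}^2 + h_K |w|_{2, K}^2$ on each boundary element, combined with the local form of \eqref{eq:IhError1}, gives $\sum_{F \in \mathcal F_h^\partial} h_F^{-1} \|\partial_n(u - I_h u)\|_{0, F}^2 \lesssim h^2 |u|_3^2$. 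Assembling the three bounds produces \eqref{eq:IhError4}.

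For \eqref{eq:IhError3}, the $H^1$ piece is handled via the decomposition $u - I_h u = \big((u - u^0) - I_h(u - u^0)\big) + (u^0 - I_h u^0)$. To the first summand I apply both \eqref{eq:IhError21} and \eqref{eq:IhError2} with $v = u - u^0 \in H^2(\Omega) \cap H_0^1(\Omega)$, using $|u - u^0|_1 \lesssim \varepsilon^{1/2}\|f\|_0$ and $|u - u^0|_2 \leq |u|_2 + |u^0|_2 \lesssim \varepsilon^{-1/2}\|f\|_0$ from \eqref{eq:regularity1}--\eqref{eq:regularity3}, so that the two bounds combine to $|(u - u^0) - I_h(u - u^0)|_{1, h} \lesssim \min\{\varepsilon^{1/2}, h^{1/2}\}\|f\|_0$; for the second summand \eqref{eq:IhError1} gives $|u^0 - I_h u^0|_{1, h} \lesssim h^{s-1}|u^0|_s$. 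For the $\varepsilon H^2$ piece, \eqref{eq:IhError1} applied to $u$ with $(s, m) = (2, 2)$ and $(3, 2)$ gives $|u - I_h u|_{2, h} \lesssim \min\{|u|_2, h|u|_3\}$, and \eqref{eq:regularity1} then yields
\[
\varepsilon |u - I_h u|_{2, h} \lesssim \min\{\varepsilon^{1/2}, h\varepsilon^{-1/2}\}\|f\|_0 \leq \min\{\varepsilon^{1/2}, h^{1/2}\}\|f\|_0.
\]
The boundary trace is handled analogously: the trace inequality and the local form of \eqref{eq:IhError1} for $s = 2$ and $s = 3$ give $\sum_F h_F^{-1}\|\partial_n(u - I_h u)\|_{0, F}^2 \lesssim h^{2s-4}|u|_s^2$, and multiplying by $\varepsilon^2$ together with \eqref{eq:regularity1} yields $\lesssim \min\{\varepsilon, h^2\varepsilon^{-1}\}\|f\|_0^2 \leq \min\{\varepsilon, h\}\|f\|_0^2$. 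Summing the three contributions and taking square roots proves \eqref{eq:IhError3}.

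The main obstacle is the simultaneous uniform control of $\varepsilon |u - I_h u|_{2, h}$ and of the boundary trace in the regime $\varepsilon > h$: for $s = 3$ the regularity $|u|_3 \lesssim \varepsilon^{-3/2}\|f\|_0$ blows up, while the $s = 2$ estimates are too coarse on their own. Only by combining the two via the elementary $\min\{a, b\} \leq \sqrt{ab}$, together with the splitting $u = u^0 + (u - u^0)$ for the $H^1$ piece, does the uniform bound $\min\{\varepsilon^{1/2}, h^{1/2}\}\|f\|_0 + h^{s-1}|u^0|_s$ emerge.
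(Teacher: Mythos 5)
Your proposal is correct and follows essentially the same route as the paper: \eqref{eq:IhError4} from \eqref{eq:IhError1} plus the trace inequality, and \eqref{eq:IhError3} via the splitting $u=u^0+(u-u^0)$ for the $H^1$ part together with interpolating the $s=2$ and $s=3$ estimates (the paper writes the combined bounds as $\min\{|u|_2^2,\,h|u|_2|u|_3\}$, which is your $\min\{a,b\}\le\sqrt{ab}$ step in product form). No gaps.
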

\begin{proof}
  The inequality \eqref{eq:IhError4} follows from \eqref{eq:IhError1} and the trace inequality.
  Next we prove \eqref{eq:IhError3}.
  By \eqref{eq:IhError21}, we have
  \[
    |u-u^0-I_h(u-u^0)|_{1,h}\lesssim |u-u^0|_1.
  \]
  Due to \eqref{eq:IhError2}, it follows
  \[
    |u-u^0-I_h(u-u^0)|_{1,h}^2\lesssim h|u-u^0|_1|u-u^0|_2.
  \]
  Combining the last two inequality, we get from \eqref{eq:regularity1}-\eqref{eq:regularity3} that
  \begin{align*}
    |u-u^0-I_h(u-u^0)|_{1,h}^2 & \lesssim \min\{|u-u^0|_1^2, h|u-u^0|_1|u-u^0|_2\} \\
                               & \lesssim \min\{\varepsilon, h\}\|f\|_0^2.
  \end{align*}
  Then using the triangle inequality and \eqref{eq:IhError1}, we acquire
  \begin{align}
    |u-I_hu|_{1,h} & \leq |u-u^0-I_h(u-u^0)|_{1,h}+|u^0-I_hu^0|_{1,h} \notag                                       \\
                   & \lesssim \min\{\varepsilon^{1/2}, h^{1/2}\}\|f\|_0 +h^{s-1}|u^0|_{s}.\label{eq:temp201806107}
  \end{align}
  Applying \eqref{eq:IhError1} and \eqref{eq:regularity1} again, we obtain
  \[
    |u-I_hu|_{2,h}\lesssim |u|_2\lesssim \varepsilon^{-1/2}\|f\|_0,\; |u-I_hu|_{2,h}^2\lesssim h|u|_2|u|_3\lesssim h\varepsilon^{-2}\|f\|_0^2.
  \]
  Thus
  \begin{equation}\label{eq:temp201806111}
    \varepsilon|u-I_hu|_{2,h}\lesssim \min\{\varepsilon^{1/2}, h^{1/2}\}\|f\|_0.
  \end{equation}
  By the trace inequality, \eqref{eq:IhError1} and \eqref{eq:regularity1},
  \begin{align*}
    \sum_{F \in \mathcal{F}_{h}^{\partial}}h_F^{-1}\|\partial_{n}(u-I_hu)\|_{0,F}^2 & \lesssim \sum_{K\in\mathcal T_h}(h_K^{-2}|u-I_hu|_{1,K}^2+|u-I_hu|_{2,K}^2)                    \\
                                                                                    & \lesssim \min\{|u|_2^2, h|u|_2|u|_3\}\lesssim \varepsilon^{-2}\min\{\varepsilon, h\}\|f\|_0^2.
  \end{align*}
  Finally we derive \eqref{eq:IhError3} from \eqref{eq:temp201806107}-\eqref{eq:temp201806111} and the last inequality.
\end{proof}

\begin{lemma}
  We have
  \begin{equation}\label{eq:Ih0Error4}
    \|u-I_{h0}u\|_{\varepsilon,h}\lesssim (\varepsilon h+ h^{2})|u|_3,
  \end{equation}
  \begin{equation}\label{eq:Ih0Error3}
    \|u-I_{h0}u\|_{\varepsilon,h}\lesssim h^{1/2}\|f\|_0.
  \end{equation}
\end{lemma}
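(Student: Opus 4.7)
The first inequality \eqref{eq:Ih0Error4} is immediate from \eqref{eq:Ih0Error1} applied with $s=3$, which is valid since the regularity \eqref{eq:regularity1} places $u$ in $H^3(\Omega)\cap H_0^2(\Omega)$; squaring and summing,
\[
\|u-I_{h0}u\|_{\varepsilon,h}^2\lesssim\varepsilon^2 h^2|u|_3^2+h^4|u|_3^2\le(\varepsilon h+h^2)^2|u|_3^2.
\]

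For the uniform-in-$\varepsilon$ bound \eqref{eq:Ih0Error3} I would treat the Hessian and gradient pieces of $\|\cdot\|_{\varepsilon,h}$ by different arguments. The Hessian piece yields to a direct interpolation: applying \eqref{eq:Ih0Error1} at $(s,m)=(2,2)$ and $(3,2)$ and using $\min(a,b)\le\sqrt{ab}$ gives $|u-I_{h0}u|_{2,h}^2\lesssim h|u|_2|u|_3$, and multiplying the two halves of \eqref{eq:regularity1} produces $\varepsilon^2|u|_2|u|_3=(\varepsilon|u|_2)(\varepsilon|u|_3)\lesssim\varepsilon^{1/2}\|f\|_0\cdot\varepsilon^{-1/2}\|f\|_0=\|f\|_0^2$, whence $\varepsilon^2|u-I_{h0}u|_{2,h}^2\lesssim h\|f\|_0^2$. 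The gradient piece is more subtle because a naive use of \eqref{eq:Ih0Error2} costs a spurious factor $\varepsilon^{-1/2}$; I would therefore decompose $u-I_{h0}u=(u-I_hu)+(I_hu-I_{h0}u)$ and bound the first summand by $|u-I_hu|_{1,h}\le\interleave u-I_hu\interleave_{\varepsilon,h}\lesssim h^{1/2}\|f\|_0$, combining \eqref{eq:IhError3} with $s=2$ and $|u^0|_2\lesssim\|f\|_0$ from \eqref{eq:regularity3}.

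The second summand is the technical heart. By construction $I_hu-I_{h0}u=E_hI_h^{SZ}u-E_{h0}I_h^{SZ}u$ is supported on the elements touching $\partial\Omega$, and comparing the defining degrees of freedom of $E_h$ and $E_{h0}$ shows that on each such $K$ its only nonvanishing MWX degrees of freedom are $\int_F\partial_{n_F}I_h^{SZ}u$ for $F\in\mathcal{F}^{\partial}(K)$. A reference-simplex scaling argument in the spirit of the proof of \eqref{eq:IhError1} then yields
\[
|I_hu-I_{h0}u|_{1,h}^2\lesssim h\sum_{F\in\mathcal{F}_h^{\partial}}\|\partial_{n_F}I_h^{SZ}u\|_{0,F}^2,
\]
so it suffices to show this sum is $\lesssim\|f\|_0^2$. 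Splitting $I_h^{SZ}u=I_h^{SZ}u^0+I_h^{SZ}(u-u^0)$: the $u^0$ contribution is controlled by Scott-Zhang approximation together with the multiplicative trace inequality $\|\nabla u^0\|_{0,\partial\Omega}^2\lesssim\|u^0\|_1\|u^0\|_2\lesssim\|f\|_0^2$ (via \eqref{eq:regularity3}); for the $u-u^0$ contribution I would use $\partial_n u=0$ on $\partial\Omega$ (so that $\partial_n(u-u^0)|_{\partial\Omega}=-\partial_n u^0|_{\partial\Omega}$), take the minimum of the Scott-Zhang stability bound $\lesssim h^{-1}|u-u^0|_{1,\omega_K}^2$ and the approximation bound $\lesssim h|u-u^0|_{2,\omega_K}^2$, apply $\min(a,b)\le\sqrt{ab}$ per face, and sum to obtain a total of order $|u-u^0|_1|u-u^0|_2\lesssim\varepsilon^{1/2}\|f\|_0\cdot\varepsilon^{-1/2}\|f\|_0=\|f\|_0^2$ by \eqref{eq:regularity1}-\eqref{eq:regularity2}. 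The main obstacle is precisely this matched $\varepsilon^{\pm 1/2}$ cancellation: the boundary corrector $u-u^0$ lives in the $O(\varepsilon)$-wide layer where each individual Scott-Zhang bound degenerates, and only the product of its $H^1$ and $H^2$ estimates remains uniformly bounded.
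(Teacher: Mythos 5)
Your proposal is correct and follows essentially the same route as the paper: \eqref{eq:Ih0Error4} directly from \eqref{eq:Ih0Error1}, and for \eqref{eq:Ih0Error3} the splitting $u-I_{h0}u=(u-I_hu)+(I_hu-I_{h0}u)$ with the first piece controlled by \eqref{eq:IhError3} (at $s=2$) and the second reduced, via the boundary-face normal-derivative degrees of freedom and $\partial_nu=0$, to $\sum_F h_F\|\partial_n(I_h^{SZ}u-u)\|_{0,F}^2$, which is then bounded through the decomposition $u=u^0+(u-u^0)$ and the multiplicative trace inequality exploiting the cancellation $|u-u^0|_1|u-u^0|_2\lesssim\|f\|_0^2$. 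The only cosmetic difference is that you bound the weighted Hessian term $\varepsilon^2|u-I_{h0}u|_{2,h}^2$ directly from \eqref{eq:Ih0Error1} and \eqref{eq:regularity1}, whereas the paper routes it through $\varepsilon^2|I_hu-I_{h0}u|_{2,h}^2\lesssim\varepsilon^2\sum_Fh_F^{-1}\|\partial_n(I_h^{SZ}u-u)\|_{0,F}^2\lesssim h\|f\|_0^2$ together with \eqref{eq:IhError3}; both are valid.
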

\begin{proof}
  The inequality \eqref{eq:Ih0Error4} is the immediate result of \eqref{eq:Ih0Error1}.
  Applying the trace inequality \eqref{eq:temp201806104}, \eqref{eq:szerror} and \eqref{eq:regularity1}-\eqref{eq:regularity3},
  \[
    \sum_{F\in\mathcal F_h^{\partial}}h_F\|\partial_n(I_{h}^{SZ}(u-u^0)-(u-u^0))\|_{0,F}^2
    \lesssim h|u-u^0|_{1}|u-u^0|_{2}\lesssim h\|f\|_0^2.
  \]
  Using \eqref{eq:szerror} and \eqref{eq:regularity3} again,
  \begin{align*}
             & \sum_{F\in\mathcal F_h^{\partial}}h_F\|\partial_n(I_{h}^{SZ}u-u)\|_{0,F}^2                                                                                              \\
    \lesssim & \sum_{F\in\mathcal F_h^{\partial}}h_F\|\partial_n(I_{h}^{SZ}(u-u^0)-(u-u^0))\|_{0,F}^2 + \sum_{F\in\mathcal F_h^{\partial}}h_F\|\partial_n(I_{h}^{SZ}u^0-u^0)\|_{0,F}^2 \\
    \lesssim & h\|f\|_0^2+h^2|u^0|_2^2\lesssim h\|f\|_0^2.
  \end{align*}
  By the definitions of $I_{h0}$ and $I_{h}$, it follows
  \begin{align*}
    |I_{h}u-I_{h0}u|_{1,h}^2 & \lesssim\sum_{F\in\mathcal F_h^{\partial}}h_F\|\partial_n(I_{h}^{SZ}u)\|_{0,F}^2=\sum_{F\in\mathcal F_h^{\partial}}h_F\|\partial_n(I_{h}^{SZ}u-u)\|_{0,F}^2\lesssim h\|f\|_0^2.
  \end{align*}
  On the other side, we get from \eqref{eq:szerror} and \eqref{eq:regularity1} that
  \begin{align*}
    |I_{h}u-I_{h0}u|_{2,h}^2 & \lesssim\sum_{F\in\mathcal F_h^{\partial}}h_F^{-1}\|\partial_n(I_{h}^{SZ}u)\|_{0,F}^2=\sum_{F\in\mathcal F_h^{\partial}}h_F^{-1}\|\partial_n(I_{h}^{SZ}u-u)\|_{0,F}^2 \\
                             & \lesssim h|u|_2|u|_3\lesssim \varepsilon^{-2}h\|f\|_0^2.
  \end{align*}
  Thus we obtain from the last two inequalities
  \[
    \|I_{h}u-I_{h0}u\|_{\varepsilon,h}\lesssim h^{1/2}\|f\|_0,
  \]
  which combined with \eqref{eq:IhError3} indicates \eqref{eq:Ih0Error3}.
\end{proof}

\section{Morley-Wang-Xu Element Method}

We will propose an MWX element method for the fourth order elliptic singular perturbation problem
\eqref{fourthorderpertub} in this section.

\subsection{Morley-Wang-Xu Element Method}
To present the MWX element method, we need the $H^1$-orthogonal projection $P_h: H^1(\mathcal T_h)\to W_h$: given $v_h\in H^1(\mathcal T_h)$, $P_hv_h\in W_h$ is determined by
\[
  (\nabla P_hv_h, \nabla \chi_h)= (\nabla_hv_h, \nabla \chi_h)\quad\forall~\chi_h\in W_h.
\]
It is well-known that for $s\geq 1$  (cf. \cite{Ciarlet1978,BrennerScott2008})
\begin{equation}\label{eq:Pherror}
  |v-P_hv|_1\lesssim h^{\min\{s-1,\ell\}}\|v\|_s\quad\forall~v\in H_0^1(\Omega)\cap H^s(\Omega).
\end{equation}

We propose the following MWX element method for problem~\eqref{fourthorderpertub}: find $u_{h0} \in V_{h0} $ such that
\begin{equation}\label{mmwx0}
  \varepsilon^2 a_h(u_{h0},v_h)+b_h(u_{h0},v_h)=(f,P_hv_h) \quad \forall~v_h \in V_{h0},
\end{equation}
where
\[
  a_{h}(u_{h0}, v_h):= (\nabla_h^{2}u_{h0}, \nabla_h^{2}v_h),\quad b_{h}(u_{h0}, v_h):= (\nabla_h u_{h0}, \nabla_h v_h).
\]

We use the simplest MWX element to approximate the exact solution in the discrete method~\eqref{mmwx0}.
Compared the standard nonconforming finite element method, we only replace the right hand side term $(f, v_h)$ by $(f, P_hv_h)$,
thus the MWX element method~\eqref{mmwx0} possesses a sparser stiffness matrix than those of the discontinuous Galerkin methods.

\subsection{Error Estimates}
Using Cauchy-Schwarz inequality and \eqref{eq:Ih0Error4}-\eqref{eq:Ih0Error3}, we have following error estimates for $I_{h0}$.
\begin{lemma}
  We have for any $v_h\in V_{h0}$
  \begin{equation}\label{eq:Ih0error5}
    \varepsilon^2 a_h(I_{h0}u-u,v_h)+b_h(I_{h0}u-u,v_h)\lesssim (\varepsilon h+ h^{2})|u|_3\|v_h\|_{\varepsilon, h},
  \end{equation}
  \begin{equation}\label{eq:Ih0error6}
    \varepsilon^2 a_h(I_{h0}u-u,v_h)+b_h(I_{h0}u-u,v_h)\lesssim h^{1/2}\|f\|_0\|v_h\|_{\varepsilon, h}.
  \end{equation}
\end{lemma}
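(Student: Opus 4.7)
The proof is essentially a direct Cauchy-Schwarz chase, so my plan is to organize it into three short steps rather than invoking any new machinery.

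First, I would estimate the two bilinear forms separately using the Cauchy-Schwarz inequality on $L^2$: since $a_h(w,v_h)=(\nabla_h^2 w,\nabla_h^2 v_h)$ and $b_h(w,v_h)=(\nabla_h w,\nabla_h v_h)$, we get
\[
  a_h(w,v_h)\leq |w|_{2,h}|v_h|_{2,h},\qquad b_h(w,v_h)\leq |w|_{1,h}|v_h|_{1,h},
\]
for $w=I_{h0}u-u$. Writing the combined form as an inner product of the vectors $(\varepsilon|w|_{2,h},|w|_{1,h})$ and $(\varepsilon|v_h|_{2,h},|v_h|_{1,h})$ in $\mathbb R^2$ and applying Cauchy-Schwarz once more yields
\[
  \varepsilon^2 a_h(w,v_h)+b_h(w,v_h)\leq \|w\|_{\varepsilon,h}\|v_h\|_{\varepsilon,h},
\]
which is the key factorisation in the $\|\cdot\|_{\varepsilon,h}$-norm.

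Second, with $w=I_{h0}u-u$, I plug in the two previously established interpolation estimates. Inserting \eqref{eq:Ih0Error4} gives $\|I_{h0}u-u\|_{\varepsilon,h}\lesssim (\varepsilon h+h^2)|u|_3$, which yields the first asserted bound \eqref{eq:Ih0error5}. Inserting \eqref{eq:Ih0Error3} instead gives $\|I_{h0}u-u\|_{\varepsilon,h}\lesssim h^{1/2}\|f\|_0$, producing \eqref{eq:Ih0error6}. No additional ingredients are needed.

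There is essentially no obstacle here; the substantive work was already done in the lemmas producing \eqref{eq:Ih0Error4} and \eqref{eq:Ih0Error3}. The only subtlety worth flagging is being careful to combine the two Cauchy-Schwarz applications so that the scaling weights $\varepsilon$ are distributed symmetrically between trial and test factors, which is exactly what makes the right-hand side appear in the natural energy norm $\|\cdot\|_{\varepsilon,h}$ rather than in a stronger norm that would spoil uniformity in $\varepsilon$.
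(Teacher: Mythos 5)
Your proposal is correct and follows exactly the paper's route: the paper derives this lemma in one line by applying the Cauchy--Schwarz inequality to the bilinear forms and then invoking the interpolation estimates \eqref{eq:Ih0Error4} and \eqref{eq:Ih0Error3}. Your explicit factorisation through the weighted vectors $(\varepsilon|w|_{2,h},|w|_{1,h})$ is just a careful spelling-out of the same argument.
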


\begin{lemma}\label{lem:consistcyerror0}
  We have for any $v_h\in V_{h0}$
  \begin{equation}\label{eq:consistcyerror0-1}
    \varepsilon^2 a_h(u,v_h)+\varepsilon^2(\div\nabla^2u, \nabla E_h^{L}v_h)\lesssim \varepsilon\min\{\varepsilon, (\varepsilon h)^{1/2}, h\}|u|_3\|v_h\|_{\varepsilon, h},
  \end{equation}
  \begin{equation}\label{eq:consistcyerror0-2}
    \varepsilon^2 a_h(u,v_h)+\varepsilon^2(\div\nabla^2u, \nabla E_h^{L}v_h)\lesssim \min\{\varepsilon^{1/2}, h^{1/2}\}\|f\|_0\|v_h\|_{\varepsilon, h}.
  \end{equation}
\end{lemma}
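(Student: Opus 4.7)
The plan is to split the target quantity via elementwise integration by parts into a volume piece governed by the connection error of $E_h^L$ and a jump piece controlled by the MWX weak continuity. Using $\div\nabla^2 u=\nabla\Delta u$ and integrating by parts once on each $K\in\mathcal T_h$, I rewrite
\[
\varepsilon^2 a_h(u,v_h)+\varepsilon^2(\div\nabla^2 u,\nabla E_h^L v_h)=T_1+T_2,
\]
where $T_1:=\varepsilon^2(\nabla\Delta u,\nabla E_h^L v_h-\nabla_h v_h)$ and $T_2:=\varepsilon^2\sum_{K\in\mathcal T_h}\int_{\partial K}(\nabla^2 u\,n_K)\cdot\nabla v_h\dd s$. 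This decomposition is natural because $E_h^L v_h\in H^1_0(\Omega)$, so all the nonconformity is isolated in $T_2$.

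For $T_1$, Cauchy--Schwarz gives $|T_1|\leq\varepsilon^2|u|_3|v_h-E_h^L v_h|_{1,h}$. Substituting the three-way estimate \eqref{eq:EhLerror1} together with $|v_h|_{1,h}\leq\|v_h\|_{\varepsilon,h}$, $\varepsilon|v_h|_{2,h}\leq\|v_h\|_{\varepsilon,h}$, and the mixed consequence $|v_h|_{1,h}^{1/2}|v_h|_{2,h}^{1/2}\leq\varepsilon^{-1/2}\|v_h\|_{\varepsilon,h}$ produces the three rates $\varepsilon^2$, $\varepsilon(\varepsilon h)^{1/2}$, $\varepsilon h$, hence $|T_1|\lesssim\varepsilon\min\{\varepsilon,(\varepsilon h)^{1/2},h\}|u|_3\|v_h\|_{\varepsilon,h}$.

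For $T_2$, since $\nabla^2 u\in H^1$ is single-valued across faces, I recast $T_2=\varepsilon^2\sum_{F\in\mathcal F_h}\int_F(\nabla^2 u\,n_F)\cdot\llbracket\nabla v_h\rrbracket\dd s$, and the MWX weak continuity \eqref{eq:mwxweakcontinuity0} allows me to subtract the patch average $c_F=\overline{(\nabla^2 u)n_F}_{\omega_F}$ for free. A trace-plus-Poincar\'e argument on $\omega_F$ yields $\|(\nabla^2 u)n_F-c_F\|_{0,F}\lesssim h_F^{1/2}|u|_{3,\omega_F}$. For the jump, since $\llbracket\nabla v_h\rrbracket$ is a polynomial of degree one on $F$ with zero mean there and tangential derivative bounded by the piecewise-constant $\nabla^2 v_h$, a Poincar\'e inequality on $F$ gives $\|\llbracket\nabla v_h\rrbracket\|_{0,F}\lesssim h_F^{1/2}|v_h|_{2,\omega_F}$. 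Summing and applying Cauchy--Schwarz produces $|T_2|\lesssim\varepsilon^2\sum_F h_F|u|_{3,\omega_F}|v_h|_{2,\omega_F}$, from which two separate estimates follow: bounding $h_F$ by $h$ gives $|T_2|\lesssim\varepsilon^2 h|u|_3|v_h|_{2,h}\lesssim\varepsilon h|u|_3\|v_h\|_{\varepsilon,h}$, while the local inverse inequality $h_F|v_h|_{2,\omega_F}\lesssim|v_h|_{1,\omega_F}$ gives $|T_2|\lesssim\varepsilon^2|u|_3|v_h|_{1,h}\lesssim\varepsilon^2|u|_3\|v_h\|_{\varepsilon,h}$; the geometric mean between the two fills in the intermediate rate $\varepsilon(\varepsilon h)^{1/2}$. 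Adding $T_1$ and $T_2$ yields \eqref{eq:consistcyerror0-1}, and \eqref{eq:consistcyerror0-2} follows by substituting $|u|_3\lesssim\varepsilon^{-3/2}\|f\|_0$ from \eqref{eq:regularity1} into \eqref{eq:consistcyerror0-1} and checking, by cases, that $\varepsilon^{-1/2}\min\{\varepsilon,h\}\leq\min\{\varepsilon^{1/2},h^{1/2}\}$.

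The main obstacle is producing the two sharp bounds on $T_2$ simultaneously; the Poincar\'e step with zero-mean jumps saves an $h_F$ factor over the plain trace bound, and the local inverse inequality then converts the saved $h_F$ into either a $|v_h|_{1,h}$ or $|v_h|_{2,h}$ control as needed.
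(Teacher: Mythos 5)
Your proof is correct and follows essentially the same route as the paper: the same splitting into a volume term controlled by the $E_h^L$ connection error \eqref{eq:EhLerror1} and a face-jump term handled via the weak continuity \eqref{eq:mwxweakcontinuity0} and subtraction of face constants, followed by \eqref{eq:regularity1}. The only cosmetic difference is that the paper subtracts the $L^2(F)$-projection $Q_0^F$ from both factors and states the three-way $\min$ directly, whereas you subtract a constant from the $(\nabla^2 u)n_F$ factor only and recover the intermediate rate by a geometric mean; these are equivalent.
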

\begin{proof}
  We get from integration by parts and \eqref{eq:mwxweakcontinuity0} that
  \begin{align*}
      & a_h(u,v_h)+(\div\nabla^2u, \nabla_hv_h)                                                                                                  \\
    = & \sum_{K\in\mathcal T_h}((\nabla^2u)n, \nabla_hv_h)_{\partial K}
    =\sum_{F\in\mathcal F_h}((\nabla^2u)n_F, \llbracket\nabla_hv_h\rrbracket)_F                                                                  \\
    = & \sum_{F\in\mathcal F_h}((\nabla^2u)n_F-Q_0^F((\nabla^2u)n_F), \llbracket\nabla_hv_h\rrbracket)_F                                         \\
    = & \sum_{F\in\mathcal F_h}((\nabla^2u)n_F-Q_0^F((\nabla^2u)n_F), \llbracket\nabla_hv_h\rrbracket-Q_0^F(\llbracket\nabla_hv_h\rrbracket))_F,
  \end{align*}
  where $Q_0^F$ is the $L^2$-orthogonal projection onto the constant space on face $F$.
  By the error estimate of $Q_0^F$ (cf. \cite{Ciarlet1978,BrennerScott2008}) and the inverse inequality, we have
  \[
    a_h(u,v_h)+(\div\nabla^2u, \nabla_hv_h)
    \lesssim |u|_3\min\left\{|v_h|_{1,h}, h^{1/2}|v_h|_{1,h}^{1/2}|v_h|_{2,h}^{1/2}, h|v_h|_{2,h}\right\}.
  \]
  On the other side, it follows from \eqref{eq:EhLerror1} that
  \begin{align*}
    (\div\nabla^2u, \nabla_h(E_h^{L}v_h-v_h)) & \lesssim |u|_3|E_h^Lv_h-v_h|_{1,h}                                                                     \\
                                              & \lesssim |u|_3\min\left\{|v_h|_{1,h}, h^{1/2}|v_h|_{1,h}^{1/2}|v_h|_{2,h}^{1/2}, h|v_h|_{2,h}\right\}.
  \end{align*}
  Combining the last two inequalities gives
  \[
    a_h(u,v_h)+(\div\nabla^2u, \nabla E_h^{L}v_h)
    \lesssim  |u|_3\min\left\{|v_h|_{1,h}, h^{1/2}|v_h|_{1,h}^{1/2}|v_h|_{2,h}^{1/2}, h|v_h|_{2,h}\right\}.
  \]
  Finally we derive \eqref{eq:consistcyerror0-1}-\eqref{eq:consistcyerror0-2} from \eqref{eq:regularity1}.
\end{proof}

\begin{lemma}\label{lem:consistcyerror1}
  It holds for any  $v_h\in V_{h0}$
  \begin{equation}\label{eq:nonconformestimate3}
    b_h(u, v_h-E_h^Lv_h)- (f, P_hv_h-E_h^Lv_h)\lesssim (\varepsilon^{-1/2}+1)h\|f\|_0\|v_h\|_{\varepsilon, h}.
  \end{equation}
  If $u^0\in H^s(\Omega)$ with $2\leq s\leq 3$, it holds for any $v_h\in V_{h0}$
  \begin{align}
             & b_h(u, v_h-E_h^Lv_h)- (f, P_hv_h-E_h^Lv_h) \notag                                                                                               \\
    \lesssim & \left(\min\{\varepsilon^{1/2}, h^{1/2}\}\|f\|_0 +h^{\min\{s-1,\ell\}}\|u^0\|_{s}\right)\|v_h\|_{\varepsilon, h}. \label{eq:nonconformestimate2}
  \end{align}
\end{lemma}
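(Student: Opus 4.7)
The plan is to convert the $(f, \cdot)$ terms into gradient inner products via the reduced Poisson problem \eqref{poisson} for $u^0$. Since $P_h v_h$ and $E_h^L v_h$ both lie in $H_0^1(\Omega)$, integration by parts in \eqref{poisson} yields $(f, P_h v_h - E_h^L v_h) = (\nabla u^0, \nabla P_h v_h - \nabla E_h^L v_h)$, and $u \in H_0^2(\Omega)$ gives $b_h(u, v_h - E_h^L v_h) = (\nabla u, \nabla_h v_h - \nabla E_h^L v_h)$. Inserting $\pm(\nabla u^0, \nabla_h v_h)$ and regrouping produces the key identity
\[
  b_h(u, v_h - E_h^L v_h) - (f, P_h v_h - E_h^L v_h) = (\nabla(u - u^0), \nabla_h v_h - \nabla E_h^L v_h) + (\nabla u^0, \nabla_h v_h - \nabla P_h v_h),
\]
which separates the $\varepsilon$-dependent boundary-layer contribution from the $\varepsilon$-insensitive Galerkin consistency error of the Poisson projection.

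For the second piece I would use Galerkin orthogonality of $P_h$ with test function $P_h u^0 \in W_h$ to rewrite it as $(\nabla(u^0 - P_h u^0), \nabla_h v_h - \nabla P_h v_h)$. The best-approximation property of $P_h$ in the $|\cdot|_{1,h}$ seminorm combined with \eqref{eq:EhLerror1} gives $|v_h - P_h v_h|_{1,h} \leq |v_h - E_h^L v_h|_{1,h} \lesssim |v_h|_{1,h} \leq \|v_h\|_{\varepsilon, h}$, and \eqref{eq:Pherror} controls $|u^0 - P_h u^0|_1$; taking $s=2$ together with \eqref{eq:regularity3} contributes the $h\|f\|_0$ piece of \eqref{eq:nonconformestimate3}, while general $s$ yields the $h^{\min\{s-1,\ell\}}\|u^0\|_s$ part of \eqref{eq:nonconformestimate2}. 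For the first piece, Cauchy--Schwarz with \eqref{eq:regularity2} delivers $|u - u^0|_1 \lesssim \varepsilon^{1/2}\|f\|_0$, and the paired estimate on $|v_h - E_h^L v_h|_{1,h}$ from \eqref{eq:EhLerror1} can be chosen in two ways: the sharpest $\lesssim h|v_h|_{2,h}$ combined with $\varepsilon|v_h|_{2,h} \leq \|v_h\|_{\varepsilon,h}$ furnishes the $\varepsilon^{-1/2}h$ factor needed for \eqref{eq:nonconformestimate3}, while the intermediate bound $\lesssim h^{1/2}|v_h|_{1,h}^{1/2}|v_h|_{2,h}^{1/2}$ together with AM--GM on $|v_h|_{1,h}^{1/2}(\varepsilon|v_h|_{2,h})^{1/2}$ and the trivial bound $\lesssim |v_h|_{1,h}$ jointly produce the $\min\{\varepsilon^{1/2}, h^{1/2}\}\|f\|_0$ factor for \eqref{eq:nonconformestimate2}.

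The main subtlety is the decision to funnel through $u^0$ rather than use the singular perturbation PDE directly: integrating by parts against $\varepsilon^2\Delta^2 u - \Delta u = f$ introduces an $\varepsilon^2(\nabla\Delta u, \nabla E_h^L v_h)$ contribution that, via $|u|_3 \lesssim \varepsilon^{-3/2}\|f\|_0$ from \eqref{eq:regularity1}, degrades to an unremovable $\varepsilon^{1/2}\|f\|_0\|v_h\|_{\varepsilon,h}$ term and breaks the $(\varepsilon^{-1/2}+1)h$ bound when $\varepsilon$ is not small relative to $h$. The decomposition above confines all $\varepsilon$-dependence to the harmless factor $|u - u^0|_1 \lesssim \varepsilon^{1/2}\|f\|_0$, which can absorb the $h$-dependence of $v_h - E_h^L v_h$ in whichever of the equivalent forms of \eqref{eq:EhLerror1} is needed.
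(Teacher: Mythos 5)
Your proof is correct and follows essentially the same route as the paper: both reduce $(f,P_hv_h-E_h^Lv_h)$ to $(\nabla u^0,\cdot)$ via the reduced problem \eqref{poisson}, split the result into $(\nabla(u-u^0),\nabla_h(v_h-E_h^Lv_h))$ plus a Poisson-projection consistency term, and then pair \eqref{eq:regularity2} with the appropriate branch of \eqref{eq:EhLerror1} and \eqref{eq:Pherror}. The only cosmetic caveat is that your appeal to best approximation, $|v_h-P_hv_h|_{1,h}\leq|v_h-E_h^Lv_h|_{1,h}$, is not literally available when $\ell=1$ (since $E_h^Lv_h$ is quadratic and so need not lie in $W_h$), but the bound you actually need, $|v_h-P_hv_h|_{1,h}\lesssim|v_h|_{1,h}$, follows directly from the $H^1$-stability of the projection $P_h$, so nothing breaks.
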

\begin{proof}
  Since $P_hv_h-E_h^Lv_h\in H_0^1(\Omega)$, we get from \eqref{poisson}, integration by parts and the definition of $P_h$ that
  \begin{align*}
    (f, P_hv_h-E_h^Lv_h) & =(\nabla u^0, \nabla(P_hv_h-E_h^Lv_h))=(\nabla u^0, \nabla P_hv_h)-(\nabla u^0, \nabla E_h^Lv_h) \\
                         & =(\nabla P_hu^0, \nabla_hv_h)-(\nabla u^0, \nabla E_h^Lv_h)                                      \\
                         & =(\nabla (P_hu^0-u^0), \nabla_h v_h)+(\nabla u^0, \nabla_h(v_h-E_h^Lv_h)).
  \end{align*}
  Thus we have
  \begin{align*}
      & b_h(u, v_h-E_h^Lv_h)-(f, P_hv_h-E_h^Lv_h)                                    \\
    = & (\nabla(u-u^0), \nabla_h(v_h-E_h^Lv_h))+(\nabla (u^0-P_hu^0), \nabla_h v_h).
  \end{align*}
  Adopting Cauchy-Schwarz inequality and \eqref{eq:regularity2}, it holds
  \begin{align*}
             & b_h(u, v_h-E_h^Lv_h)-(f, P_hv_h-E_h^Lv_h)                               \\
    \lesssim & |u-u^0|_{1}|v_h-E_h^Lv_h|_{1,h}+|u^0-P_hu^0|_1|v_h|_{1,h}               \\
    \lesssim & \varepsilon^{1/2}\|f\|_0|v_h-E_h^Lv_h|_{1,h}+|u^0-P_hu^0|_1|v_h|_{1,h}.
  \end{align*}
  Thanks to \eqref{eq:EhLerror1}, we get
  \[
    |v_h-E_h^Lv_h|_{1,h}\lesssim  \varepsilon^{-1/2}\min\{\varepsilon^{1/2}, h^{1/2}, \varepsilon^{-1/2}h\}\|v_h\|_{\varepsilon, h}.
  \]
  Hence we obtain
  \begin{align*}
             & b_h(u, v_h-E_h^Lv_h)-(f, P_hv_h-E_h^Lv_h)                                                                                      \\
    \lesssim & \min\{\varepsilon^{1/2}, h^{1/2}, \varepsilon^{-1/2}h\}\|f\|_0\|v_h\|_{\varepsilon, h}+|u^0-P_hu^0|_1\|v_h\|_{\varepsilon, h}.
  \end{align*}
  Therefore we acquire \eqref{eq:nonconformestimate3}-\eqref{eq:nonconformestimate2} from \eqref{eq:Pherror}.
\end{proof}

\begin{theorem}\label{thm:ipmwxpriori1}
  Let $u\in H_0^2(\Omega)$ be the solution of problem~\eqref{fourthorderpertub}, and $u_{h0}\in V_{h0}$ be the discrete solution of the MWX element method~\eqref{mmwx0}.
  Assume $u^0\in H^s(\Omega)$ with $2\leq s\leq 3$.
  We have
  \begin{equation}\label{eq:error0}
    \|u-u_{h0}\|_{\varepsilon, h}\lesssim \varepsilon^{1/2}\|f\|_0 +h^{\min\{s-1,\ell\}}\|u^0\|_{s}+h(\varepsilon+h)|u|_3,
  \end{equation}
  \begin{equation}\label{eq:error1}
    \|u-u_{h0}\|_{\varepsilon, h}\lesssim h\left((\varepsilon^{-1/2}+1)\|f\|_0+(\varepsilon+h)|u|_3\right),
  \end{equation}
  \begin{equation}\label{eq:error2}
    \|u-u_{h0}\|_{\varepsilon, h}\lesssim h^{1/2}\|f\|_0,
  \end{equation}
  \begin{equation}\label{eq:error3}
    \|u^0-u_{h0}\|_{\varepsilon, h}\lesssim \left(\varepsilon^{1/2}+h^{1/2}\right)\|f\|_0.
  \end{equation}
\end{theorem}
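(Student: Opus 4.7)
The plan is to apply a second Strang-type argument: split $u-u_{h0}$ into an interpolation piece and a discrete piece via $u-u_{h0}=(u-I_{h0}u)+(I_{h0}u-u_{h0})$, control the first term directly by \eqref{eq:Ih0Error4}--\eqref{eq:Ih0Error3}, and control the second by using the discrete coercivity of $\varepsilon^{2}a_h(\cdot,\cdot)+b_h(\cdot,\cdot)$ with respect to $\|\cdot\|_{\varepsilon,h}$ on $V_{h0}$ (which is immediate since $\|\xi_h\|_{\varepsilon,h}^{2}=\varepsilon^{2}a_h(\xi_h,\xi_h)+b_h(\xi_h,\xi_h)$). Setting $\xi_h:=I_{h0}u-u_{h0}\in V_{h0}$ and using the discrete equation \eqref{mmwx0}, I would write
\[
\|\xi_h\|_{\varepsilon,h}^{2}=\bigl[\varepsilon^{2}a_h(I_{h0}u-u,\xi_h)+b_h(I_{h0}u-u,\xi_h)\bigr]+\bigl[\varepsilon^{2}a_h(u,\xi_h)+b_h(u,\xi_h)-(f,P_h\xi_h)\bigr].
\]
The first bracket is an interpolation-consistency term handled by \eqref{eq:Ih0error5}--\eqref{eq:Ih0error6}.

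The main work lies in the second bracket (consistency error). To put it in a form digestible by Lemmas~\ref{lem:consistcyerror0} and \ref{lem:consistcyerror1}, I would introduce the conforming companion $E_h^{L}\xi_h\in W_h\subset H_0^{1}(\Omega)$ and split
\[
b_h(u,\xi_h)=b_h(u,\xi_h-E_h^{L}\xi_h)+(\nabla u,\nabla E_h^{L}\xi_h).
\]
Since $u\in H^{3}(\Omega)$ by \eqref{eq:regularity1} and $E_h^{L}\xi_h\in H_0^{1}(\Omega)$, integration by parts applied to the strong form $\varepsilon^{2}\Delta^{2}u-\Delta u=f$ tested against $E_h^{L}\xi_h$ yields
\[
(\nabla u,\nabla E_h^{L}\xi_h)-\varepsilon^{2}(\div\nabla^{2}u,\nabla E_h^{L}\xi_h)=(f,E_h^{L}\xi_h).
\]
Substituting this and regrouping gives exactly
\[
\varepsilon^{2}a_h(u,\xi_h)+b_h(u,\xi_h)-(f,P_h\xi_h)=\underbrace{\varepsilon^{2}a_h(u,\xi_h)+\varepsilon^{2}(\div\nabla^{2}u,\nabla E_h^{L}\xi_h)}_{\text{Lemma \ref{lem:consistcyerror0}}}+\underbrace{b_h(u,\xi_h-E_h^{L}\xi_h)-(f,P_h\xi_h-E_h^{L}\xi_h)}_{\text{Lemma \ref{lem:consistcyerror1}}},
\]
which is the key algebraic identity; producing it cleanly is the main obstacle, as every other step is routine.

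From here, dividing by $\|\xi_h\|_{\varepsilon,h}$ and combining Lemmas~\ref{lem:consistcyerror0}--\ref{lem:consistcyerror1} with \eqref{eq:Ih0error5}--\eqref{eq:Ih0error6} and the interpolation estimates \eqref{eq:Ih0Error4}--\eqref{eq:Ih0Error3} gives $\|\xi_h\|_{\varepsilon,h}$, and then the triangle inequality delivers the three bounds \eqref{eq:error0}--\eqref{eq:error2}. More precisely, for \eqref{eq:error0} I would use the $s$-dependent bound from \eqref{eq:nonconformestimate2}, the first branch of \eqref{eq:consistcyerror0-1} (the $\varepsilon\cdot\varepsilon=\varepsilon^{2}$ form combined with $\varepsilon|u|_{3}\lesssim\varepsilon^{-1/2}\|f\|_0$ absorbed into $\varepsilon^{1/2}\|f\|_0$), and \eqref{eq:Ih0Error4}; for \eqref{eq:error1} use the $h$-branches throughout together with \eqref{eq:nonconformestimate3} and \eqref{eq:Ih0Error4}; for \eqref{eq:error2} use the $h^{1/2}\|f\|_0$ branches in \eqref{eq:consistcyerror0-2}, \eqref{eq:nonconformestimate3}, \eqref{eq:Ih0error6}, and \eqref{eq:Ih0Error3}.

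Finally, for \eqref{eq:error3} I would simply insert $u$ and apply the triangle inequality,
\[
\|u^{0}-u_{h0}\|_{\varepsilon,h}\leq\|u^{0}-u\|_{\varepsilon,h}+\|u-u_{h0}\|_{\varepsilon,h},
\]
bound the second term by \eqref{eq:error2}, and control the first by the regularity estimates \eqref{eq:regularity1}--\eqref{eq:regularity3}: $|u^{0}-u|_{1}\lesssim\varepsilon^{1/2}\|f\|_{0}$ directly, while $\varepsilon|u^{0}-u|_{2}\leq\varepsilon|u|_{2}+\varepsilon|u^{0}|_{2}\lesssim\varepsilon^{1/2}\|f\|_{0}+\varepsilon\|f\|_{0}\lesssim\varepsilon^{1/2}\|f\|_{0}$, yielding $\|u^{0}-u\|_{\varepsilon,h}\lesssim\varepsilon^{1/2}\|f\|_{0}$ and hence \eqref{eq:error3}.
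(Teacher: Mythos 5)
Your proposal is correct and follows essentially the same route as the paper: the same splitting $u-u_{h0}=(u-I_{h0}u)+(I_{h0}u-u_{h0})$, the same key identity obtained by testing the strong form against the conforming companion $E_h^{L}\xi_h$ and regrouping into the terms handled by Lemmas~\ref{lem:consistcyerror0} and \ref{lem:consistcyerror1}, and the same use of \eqref{eq:Ih0error5}--\eqref{eq:Ih0error6}, \eqref{eq:Ih0Error4}--\eqref{eq:Ih0Error3} and the regularity bounds to conclude \eqref{eq:error0}--\eqref{eq:error3}. No gaps.
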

\begin{proof}
  Let $v_h=I_{h0}u-u_{h0}$. We obtain from \eqref{fourthorderpertub} that
  \[
    -\varepsilon^2(\div\nabla^2u, \nabla E_h^{L}v_h) + (\nabla u, \nabla E_h^{L}v_h)=(f, E_h^{L}v_h).
  \]
  Then it follows from \eqref{mmwx0} that
  \begin{align*}
      & \varepsilon^2 a_h(u-u_{h0},v_h)+b_h(u-u_{h0},v_h)                                                                      \\
    = & \varepsilon^2 a_h(u,v_h)+b_h(u,v_h)-(f, P_hv_h)                                                                        \\
    = & \varepsilon^2 a_h(u,v_h)+\varepsilon^2(\div\nabla^2u, \nabla E_h^{L}v_h)+b_h(u,v_h-E_h^{L}v_h)-(f, P_hv_h-E_h^{L}v_h).
  \end{align*}
  Hence we acquire from
  \eqref{eq:consistcyerror0-1}-\eqref{eq:consistcyerror0-2} and \eqref{eq:nonconformestimate3}-\eqref{eq:nonconformestimate2} that
  \[
    \varepsilon^2 a_h(u-u_{h0},v_h)+b_h(u-u_{h0},v_h)\lesssim h\left((\varepsilon^{-1/2}+1)\|f\|_0+\varepsilon|u|_3\right)\|v_h\|_{\varepsilon, h},
  \]
  \begin{align*}
             & \varepsilon^2 a_h(u-u_{h0},v_h)+b_h(u-u_{h0},v_h)                                                                \\
    \lesssim & \left(\min\{\varepsilon^{1/2}, h^{1/2}\}\|f\|_0 +h^{\min\{s-1,\ell\}}\|u^0\|_{s}\right)\|v_h\|_{\varepsilon, h}.
  \end{align*}
  Using the triangle inequality and \eqref{eq:Ih0error5}-\eqref{eq:Ih0error6}, we get
  \begin{align*}
             & \varepsilon^2 a_h(I_{h0}u-u_{h0},v_h)+b_h(I_{h0}u-u_{h0},v_h)                                                         \\
    \lesssim & \left(\varepsilon^{1/2}\|f\|_0 +h^{\min\{s-1,\ell\}}\|u^0\|_{s}+h(\varepsilon+h)|u|_3\right)\|v_h\|_{\varepsilon, h},
  \end{align*}
  \begin{align*}
             & \varepsilon^2 a_h(I_{h0}u-u_{h0},v_h)+b_h(I_{h0}u-u_{h0},v_h)                             \\
    \lesssim & h\left((\varepsilon^{-1/2}+1)\|f\|_0+(\varepsilon+h)|u|_3\right)\|v_h\|_{\varepsilon, h},
  \end{align*}
  \[
    \varepsilon^2 a_h(I_{h0}u-u_{h0},v_h)+b_h(I_{h0}u-u_{h0},v_h)\lesssim h^{1/2}\|f\|_0\|v_h\|_{\varepsilon, h}.
  \]
  Thus
  \[
    \|I_{h0}u-u_{h0}\|_{\varepsilon, h}\lesssim \varepsilon^{1/2}\|f\|_0 +h^{\min\{s-1,\ell\}}\|u^0\|_{s}+h(\varepsilon+h)|u|_3,
  \]
  \[
    \|I_{h0}u-u_{h0}\|_{\varepsilon, h}\lesssim h\left((\varepsilon^{-1/2}+1)\|f\|_0+(\varepsilon+h)|u|_3\right),
  \]
  \[
    \|I_{h0}u-u_{h0}\|_{\varepsilon, h}\lesssim h^{1/2}\|f\|_0.
  \]
  Finally we get \eqref{eq:error0}-\eqref{eq:error2} by combining the last three inequalities and \eqref{eq:Ih0Error4}-\eqref{eq:Ih0Error3}.
  The estimate \eqref{eq:error3} is a direct result of \eqref{eq:error2} and \eqref{eq:regularity1}-\eqref{eq:regularity3}.
\end{proof}

\section{Imposing Boundary Condition Using Nitsche's Method}
In the consideration of the boundary layer of problem~\eqref{fourthorderpertub}, we will adjust the MWX element method \eqref{mmwx0} by using Nitsche's method to impose the boundary condition $\partial_nu=0$ weakly in this section, as in \cite{GuzmanLeykekhmanNeilan2012}.

\subsection{Discrete Method}
Through applying the Nitsche's technique, the MWX element method with weakly imposing the  boundary condition is to find $u_h \in V_{h} $ such that
\begin{equation}\label{mmwx}
  \varepsilon^2 \tilde a_h(u_h,v_h)+b_h(u_h,v_h)=(f,P_hv_h) \quad \forall~v_h \in V_{h},
\end{equation}
where
\begin{align*}
  \tilde a_{h}(u_h, v_h):= & (\nabla_h^{2}u_h, \nabla_h^{2}v_h) - \sum_{F \in \mathcal{F}_{h}^{\partial}}(\partial_{nn}^2u_h, \partial_{n}v_h)_F- \sum_{F \in \mathcal{F}_{h}^{\partial}}(\partial_{n}u_h, \partial_{nn}^2v_h)_F \\
                           & +\sum_{F \in \mathcal{F}_{h}^{\partial}}\frac{\sigma}{h_{F}}(\partial_{n}u_h, \partial_{n}v_h)_F
\end{align*}
with $\sigma$ being a positive real number.

\begin{lemma}
  There exists a constant $\sigma_0>0$ depending only on the shape regularity of $\mathcal T_h$ such that for any fixed number $\sigma\geq \sigma_0$, it holds
  \begin{equation}\label{eq:ahcoercive}
    \interleave v_h\interleave_{2,h}^2\lesssim \tilde a_h(v_h, v_h)\quad\forall~v_h\in V_h.
  \end{equation}
\end{lemma}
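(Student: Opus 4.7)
The plan is to execute the standard interior-penalty coercivity argument: expand $\tilde a_h(v_h,v_h)$, control the cross boundary terms by Cauchy–Schwarz and Young's inequality, and absorb half of the second-derivative contribution into $|v_h|_{2,h}^2$ while using the penalty term to dominate the remaining boundary contribution once $\sigma$ is large enough.

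More precisely, I would first write
\[
  \tilde a_h(v_h,v_h)=|v_h|_{2,h}^2 - 2\sum_{F\in\mathcal{F}_h^{\partial}}(\partial_{nn}^2 v_h,\partial_n v_h)_F + \sum_{F\in\mathcal{F}_h^{\partial}}\frac{\sigma}{h_F}\|\partial_n v_h\|_{0,F}^2.
\]
For each $F\in\mathcal{F}_h^{\partial}$ with $F\subset\partial K$, since $v_h|_K\in\mathbb P_2(K)$, a scaling/inverse-trace argument (shape regularity is used here) gives $\|\partial_{nn}^2 v_h\|_{0,F}\lesssim h_F^{-1/2}|v_h|_{2,K}$, with a constant $C_{\mathrm{tr}}$ depending only on the shape regularity. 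Combining this with Cauchy–Schwarz and then Young's inequality with a parameter $\delta>0$ yields
\[
  2\bigl|(\partial_{nn}^2 v_h,\partial_n v_h)_F\bigr|
  \le \delta\,|v_h|_{2,K}^2+\frac{C_{\mathrm{tr}}^2}{\delta}\,h_F^{-1}\|\partial_n v_h\|_{0,F}^2.
\]

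Summing over $F\in\mathcal{F}_h^{\partial}$ and noting that each simplex $K$ contributes to at most a uniformly bounded number of boundary faces, there is a shape-regularity constant $C_1$ such that
\[
  \tilde a_h(v_h,v_h)\;\ge\;(1-C_1\delta)\,|v_h|_{2,h}^2+\Bigl(\sigma-\frac{C_{\mathrm{tr}}^2}{\delta}\Bigr)\sum_{F\in\mathcal{F}_h^{\partial}}h_F^{-1}\|\partial_n v_h\|_{0,F}^2.
\]
I would then fix $\delta=1/(2C_1)$, so that $1-C_1\delta=1/2$, and set $\sigma_0:=2C_1 C_{\mathrm{tr}}^2+\tfrac12$; for any $\sigma\ge\sigma_0$ both coefficients are at least $1/2$, and the desired bound $\tilde a_h(v_h,v_h)\gtrsim\interleave v_h\interleave_{2,h}^2$ follows directly from the definition of $\interleave\cdot\interleave_{2,h}$.

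The only technical point that requires care is the discrete trace inequality $\|\partial_{nn}^2 v_h\|_{0,F}\lesssim h_F^{-1/2}|v_h|_{2,K}$; this is standard for piecewise polynomials on shape-regular meshes (it may be proved by a reference-element argument using that $\partial_{nn}^2 v_h$ is constant on $K$ when $v_h\in\mathbb P_2(K)$, together with equivalence of norms on the finite-dimensional space and a $h_K^{(d-1)/2}$ vs.\ $h_K^{d/2}$ scaling). Everything else is routine Cauchy–Schwarz, Young's inequality, and a choice of constants, so I do not foresee any genuine obstacle.
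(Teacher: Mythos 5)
Your argument is correct and is essentially the paper's own proof: both expand $\tilde a_h(v_h,v_h)$, bound the cross term via a discrete trace/inverse inequality for $\partial_{nn}^2 v_h$ on boundary faces followed by Young's inequality, and then choose $\sigma_0$ large enough to make both coefficients positive. The only cosmetic difference is that you carry an explicit Young parameter $\delta$ and work face-by-face, whereas the paper applies a single global Cauchy--Schwarz step with the fixed weight $1/2$.
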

\begin{proof}
  Due to the Cauchy-Schwarz inequality and inverse inequality, there exists a constant $C>0$ such that
  \begin{align*}
    2\sum_{F \in \mathcal{F}_{h}^{\partial}}(\partial_{nn}^2v_h, \partial_{n}v_h)_F & \leq 2\sum_{F \in \mathcal{F}_{h}^{\partial}}\|\partial_{nn}^2v_h\|_{0,F}\|\partial_{n}v_h\|_{0,F}                        \\
                                                                                    & \leq C|v_h|_{2,h}\left(\sum_{F \in \mathcal{F}_{h}^{\partial}}h_{F}^{-1}\|\partial_{n}v_h\|_{0,F}^2\right)^{1/2}          \\
                                                                                    & \leq \frac{1}{2}|v_h|_{2,h}^2+\frac{1}{2}C^2\sum_{F \in \mathcal{F}_{h}^{\partial}}h_{F}^{-1}\|\partial_{n}v_h\|_{0,F}^2.
  \end{align*}
  Hence
  \begin{align*}
    \tilde a_{h}(v_h, v_h) & = |v_h|_{2,h}^2 - 2\sum_{F \in \mathcal{F}_{h}^{\partial}}(\partial_{nn}^2v_h, \partial_{n}v_h)_F+\sum_{F \in \mathcal{F}_{h}^{\partial}}\frac{\sigma}{h_{F}}\|\partial_{n}v_h\|_{0,F}^2 \\
                           & \geq \frac{1}{2}|v_h|_{2,h}^2+\left(\sigma-\frac{1}{2}C^2\right)\sum_{F \in \mathcal{F}_{h}^{\partial}}h_{F}^{-1}\|\partial_{n}v_h\|_{0,F}^2.
  \end{align*}
  The proof is finished by choosing $\sigma_0=\frac{1}{2}C^2+1$.
\end{proof}
By \eqref{eq:ahcoercive}, we have
\begin{equation}\label{eq:coercivity}
  \interleave v_h \interleave_{\varepsilon ,h}^2\lesssim \varepsilon^2 \tilde a_h(v_h,v_h)+b_h(v_h,v_h)\ \ \forall~v_h \in V_{h}.
\end{equation}
It is obvious that
\[
  \varepsilon^2\tilde a_h(\chi_h, v_h)+b_h(\chi_h, v_h)\lesssim  \interleave\chi_h\interleave_{\varepsilon ,h}\interleave v_h\interleave_{\varepsilon ,h} \quad\forall~\chi_h, v_h\in V_h.
\]
The last two inequalities indicate
the wellposedness of the MWX element method~\eqref{mmwx}.

\subsection{Error Estimates}
In this subsection we will present the error analysis for the discrete method~\eqref{mmwx}.
\begin{lemma}
  It holds
  \begin{equation}\label{eq:Iherror5}
    \varepsilon^2 \tilde a_h(I_{h}u-u,v_h)+b_h(I_{h}u-u,v_h)\lesssim (\varepsilon h+ h^{2})|u|_3\interleave v_h\interleave_{\varepsilon, h}\quad\forall~v_h\in V_{h}.
  \end{equation}
  If $u^0\in H_0^1(\Omega)\cap H^s(\Omega)$ with $2\leq s\leq 3$, we have
  \begin{align}
             & \varepsilon^2 \tilde a_h(I_{h}u-u,v_h)+b_h(I_{h}u-u,v_h)\notag                                                                                         \\
    \lesssim & (\min\{\varepsilon^{1/2}, h^{1/2}\}\|f\|_0 +h^{s-1}|u^0|_{s})\interleave v_h\interleave_{\varepsilon, h}\quad\forall~v_h\in V_{h}. \label{eq:Iherror6}
  \end{align}
\end{lemma}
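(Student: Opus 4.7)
The plan is to split
\[
  \varepsilon^2\tilde a_h(I_hu-u,v_h)+b_h(I_hu-u,v_h)=\bigl[\varepsilon^2 a_h(I_hu-u,v_h)+b_h(I_hu-u,v_h)\bigr]+T_1+T_2+T_3,
\]
where $T_1,T_2,T_3$ denote the three Nitsche boundary contributions in $\tilde a_h-a_h$, and to estimate each piece by Cauchy--Schwarz together with the interpolation bounds \eqref{eq:IhError4}--\eqref{eq:IhError3} and the trace/inverse inequalities. The key structural observations, following directly from the definitions of the norms, are
\[
  \|w\|_{\varepsilon,h}\le \interleave w\interleave_{\varepsilon,h},\qquad \varepsilon^2\!\!\sum_{F\in\mathcal F_h^\partial}\! h_F^{-1}\|\partial_n w\|_{0,F}^2\le \interleave w\interleave_{\varepsilon,h}^2,\qquad \varepsilon|v_h|_{2,h},\;\varepsilon\interleave v_h\interleave_{2,h}\le \interleave v_h\interleave_{\varepsilon,h}.
\]

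For the interior piece, Cauchy--Schwarz together with the above immediately yields the bound $\interleave I_hu-u\interleave_{\varepsilon,h}\interleave v_h\interleave_{\varepsilon,h}$, and applying \eqref{eq:IhError4} or \eqref{eq:IhError3} delivers the two targeted right-hand sides. For $T_2$ and $T_3$ (both containing $\partial_n(I_hu-u)$), I would apply Cauchy--Schwarz with weights $h_F^{\pm 1/2}$ and invoke the inverse estimate $\|\partial_{nn}^2 v_h\|_{0,F}\lesssim h_F^{-1/2}|v_h|_{2,K}$; the $(I_hu-u)$ side reduces to $\varepsilon(\sum_F h_F^{-1}\|\partial_n(I_hu-u)\|_{0,F}^2)^{1/2}\le \interleave I_hu-u\interleave_{\varepsilon,h}$, and the $v_h$ side collapses to $\interleave v_h\interleave_{\varepsilon,h}$, so both terms are again dominated by $\interleave I_hu-u\interleave_{\varepsilon,h}\interleave v_h\interleave_{\varepsilon,h}$ and hence by the required bounds.

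The main obstacle is $T_1=-\varepsilon^2\sum_F(\partial_{nn}^2(I_hu-u),\partial_n v_h)_F$: after Cauchy--Schwarz the task is to control $\varepsilon\bigl(\sum_F h_F\|\partial_{nn}^2(I_hu-u)\|_{0,F}^2\bigr)^{1/2}$. For this I would exploit the identity $\nabla_h^3 I_hu\equiv 0$ (which holds because $I_hu|_K$ is quadratic) together with the trace inequality to obtain
\[
  \sum_{F\in\mathcal F_h^\partial}h_F\|\partial_{nn}^2(I_hu-u)\|_{0,F}^2\lesssim |I_hu-u|_{2,h}^2+h^2|u|_3^2.
\]
For \eqref{eq:Iherror5} this combines with \eqref{eq:IhError4} to recover the clean $(\varepsilon h+h^2)|u|_3$. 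For \eqref{eq:Iherror6} the delicate leftover $\varepsilon h|u|_3$ must be absorbed into $\min\{\varepsilon^{1/2},h^{1/2}\}\|f\|_0+h^{s-1}|u^0|_s$; I would handle this by splitting $u=u^0+(u-u^0)$, estimating $\varepsilon h|u^0|_s$ directly via \eqref{eq:regularity3}, and treating the boundary-layer piece $\varepsilon h|u-u^0|_3$ through the same geometric-mean interpolation used in the proof of \eqref{eq:IhError3} (combining \eqref{eq:regularity1} with \eqref{eq:regularity2}).
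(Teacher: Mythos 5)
Your decomposition into the interior part plus the three Nitsche terms, your handling of the two boundary terms containing $\partial_n(I_hu-u)$, and your argument for the first estimate \eqref{eq:Iherror5} all match the paper's proof. The gap is in the critical term $T_1=-\varepsilon^2\sum_{F\in\mathcal F_h^{\partial}}(\partial_{nn}^2(I_hu-u),\partial_nv_h)_F$ for the second estimate \eqref{eq:Iherror6}, and it is twofold. First, your additive trace bound $\sum_Fh_F\|\partial_{nn}^2(I_hu-u)\|_{0,F}^2\lesssim|I_hu-u|_{2,h}^2+h^2|u|_3^2$ is too weak: when $h>\varepsilon$ the term $h^2|u|_3^2\lesssim h^2\varepsilon^{-3}\|f\|_0^2$ dominates $h|u|_2|u|_3\lesssim h\varepsilon^{-2}\|f\|_0^2$, so pairing with $\interleave v_h\interleave_{2,h}\le\varepsilon^{-1}\interleave v_h\interleave_{\varepsilon,h}$ only yields $\varepsilon^{-1/2}h\|f\|_0$ rather than $h^{1/2}\|f\|_0$. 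The paper instead uses the multiplicative trace inequality, $\sum_Fh_F\|\partial_{nn}^2(I_hu-u)\|_{0,F}^2\lesssim|I_hu-u|_{2,h}\bigl(|I_hu-u|_{2,h}+h|u|_3\bigr)\lesssim\min\{h|u|_2|u|_3,\,h^2|u|_3^2\}$, which with \eqref{eq:regularity1} gives the clean $\min\{\varepsilon^{-2}h\|f\|_0^2,\,h^2|u|_3^2\}$.

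Second, and more seriously, even the sharp bound $T_1\lesssim h^{1/2}\|f\|_0\interleave v_h\interleave_{\varepsilon,h}$ does not suffice when $\varepsilon<h$, since the target is $\min\{\varepsilon^{1/2},h^{1/2}\}\|f\|_0$. The paper obtains the missing $\varepsilon^{1/2}\|f\|_0$ bound by a \emph{second} Cauchy--Schwarz weighting of the same term: it pairs $h_F^{1/2}\|\partial_nv_h\|_{0,F}\lesssim|v_h|_{1,K}$ (inverse inequality) with $h_F^{-1/2}\|\partial_{nn}^2(I_hu-u)\|_{0,F}\lesssim|u|_3$, so that $T_1\lesssim\varepsilon^2|v_h|_{1,h}|u|_3\lesssim\varepsilon^{1/2}\|f\|_0\,|v_h|_{1,h}$, crucially hitting the unweighted $|v_h|_{1,h}$ component of $\interleave v_h\interleave_{\varepsilon,h}$ and thereby avoiding any factor $\varepsilon^{-1}$. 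Taking the minimum of the two weightings gives \eqref{eq:temp201806261} and hence \eqref{eq:Iherror6}. Your proposed patch --- absorbing the leftover $\varepsilon h|u|_3$ by splitting $u=u^0+(u-u^0)$ --- cannot close this: $\varepsilon h|u|_3\lesssim\varepsilon^{-1/2}h\|f\|_0$ exceeds $\min\{\varepsilon^{1/2},h^{1/2}\}\|f\|_0$ precisely when $h>\varepsilon$, and the split pieces would require control of $|u^0|_3$ or $|u-u^0|_3$, neither of which is available from \eqref{eq:regularity1}--\eqref{eq:regularity3} or from the assumption $u^0\in H^s(\Omega)$ with $s$ possibly less than $3$.
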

\begin{proof}
  According to the trace inequality, \eqref{eq:IhError1} and \eqref{eq:regularity1}, it follows
  \begin{align*}
    \sum_{F \in \mathcal{F}_{h}^{\partial}}h_F\|\partial_{nn}^2(I_{h}u-u)\|_{0,F}^2 & \lesssim |I_{h}u-u|_{2,h}(|I_{h}u-u|_{2,h}+h|I_{h}u-u|_{3,h})                                    \\
                                                                                    & \lesssim \min\{h|u|_2|u|_3, h^2|u|_3^2\}\lesssim \min\{\varepsilon^{-2}h\|f\|_0^2, h^2|u|_3^2\}.
  \end{align*}
  Then we get 
  \begin{align*}
    - \sum_{F \in \mathcal{F}_{h}^{\partial}}(\partial_{nn}^2(I_{h}u-u), \partial_{n}v_h)_F
    \leq     & \sum_{F \in \mathcal{F}_{h}^{\partial}}\|\partial_{nn}^2(I_{h}u-u)\|_{0,F}\|\partial_{n}v_h\|_{0,F}                                \\
    \lesssim & \interleave v_h\interleave_{2,h}\left(\sum_{F \in \mathcal{F}_{h}^{\partial}}h_F\|\partial_{nn}^2(I_{h}u-u)\|_{0,F}^2\right)^{1/2} \\
    \lesssim & \varepsilon^{-2}\interleave v_h\interleave_{\varepsilon,h}\min\{h^{1/2}\|f\|_0, \varepsilon h|u|_3\}.
  \end{align*}
  Using the inverse inequality, \eqref{eq:IhError1} and \eqref{eq:regularity1}, we obtain
  \begin{align*}
    - \sum_{F \in \mathcal{F}_{h}^{\partial}}(\partial_{nn}^2(I_{h}u-u), \partial_{n}v_h)_F
    \leq     & \sum_{F \in \mathcal{F}_{h}^{\partial}}\|\partial_{nn}^2(I_{h}u-u)\|_{0,F}\|\partial_{n}v_h\|_{0,F}                \\
    \lesssim & |v_h|_{1,h}\left(\sum_{F \in \mathcal{F}_{h}^{\partial}}h_F^{-1}\|\partial_{nn}^2(I_{h}u-u)\|_{0,F}^2\right)^{1/2} \\
    \lesssim & |v_h|_{1,h}|u|_3\lesssim \varepsilon^{-3/2}\interleave v_h\interleave_{\varepsilon,h}\|f\|_0.
  \end{align*}
  Hence it follows from the last two inequalities that
  \begin{equation}\label{eq:temp201806261}
    - \varepsilon^{2}\sum_{F \in \mathcal{F}_{h}^{\partial}}(\partial_{nn}^2(I_{h}u-u), \partial_{n}v_h)_F\lesssim \interleave v_h\interleave_{\varepsilon,h}\min\{\varepsilon^{1/2}\|f\|_0, h^{1/2}\|f\|_0, \varepsilon h|u|_3\}.
  \end{equation}
  Since
  \begin{align*}
             & \varepsilon^2 \tilde a_h(I_{h}u-u,v_h)+b_h(I_{h}u-u,v_h)                                                                                                                                          \\
    \lesssim & \interleave I_{h}u-u\interleave_{\varepsilon,h}\interleave v_h\interleave_{\varepsilon,h} - \varepsilon^{2}\sum_{F \in \mathcal{F}_{h}^{\partial}}(\partial_{nn}^2(I_{h}u-u), \partial_{n}v_h)_F,
  \end{align*}
  we acquire \eqref{eq:Iherror5} from \eqref{eq:IhError4} and \eqref{eq:temp201806261}, and \eqref{eq:Iherror6} from \eqref{eq:IhError3} and \eqref{eq:temp201806261}.
\end{proof}

Applying the same argument as in Lemma~\ref{lem:consistcyerror0}, from \eqref{eq:mwxweakcontinuity}-\eqref{eq:mwxweakcontinuity-1} and \eqref{eq:EhLerror2} we obtain the following estimates
\begin{equation}\label{eq:consistcyerror-1}
  \varepsilon^2 \tilde a_h(u,v_h)+\varepsilon^2(\div\nabla^2u, \nabla E_h^{L}v_h)\lesssim \varepsilon\min\{\varepsilon, (\varepsilon h)^{1/2}, h\}|u|_3\interleave v_h\interleave_{\varepsilon, h},
\end{equation}
\begin{equation}\label{eq:consistcyerror-2}
  \varepsilon^2 \tilde a_h(u,v_h)+\varepsilon^2(\div\nabla^2u, \nabla E_h^{L}v_h)\lesssim \min\{\varepsilon^{1/2}, h^{1/2}\}\|f\|_0\interleave v_h\interleave_{\varepsilon, h}
\end{equation}
for any $v_h\in V_{h}$.

Applying the same argument as in Lemma~\ref{lem:consistcyerror1}, we acquire
\begin{equation}\label{eq:nonconformestimate3-1}
  b_h(u, v_h-E_h^Lv_h)- (f, P_hv_h-E_h^Lv_h)\lesssim (\varepsilon^{-1/2}+1)h\|f\|_0\interleave v_h\interleave_{\varepsilon, h}
\end{equation}
for any  $v_h\in V_{h}$. And if $u^0\in H^s(\Omega)$ with $2\leq s\leq 3$, it holds 
\begin{align}
           & b_h(u, v_h-E_h^Lv_h)- (f, P_hv_h-E_h^Lv_h) \notag                                                                                                                                              \\
  \lesssim & \left(\min\{\varepsilon^{1/2}, h^{1/2}\}\|f\|_0 +h^{\min\{s-1,\ell\}}\|u^0\|_{s}\right)\interleave v_h\interleave_{\varepsilon, h} \quad\forall~v_h\in V_{h}. \label{eq:nonconformestimate2-1}
\end{align}

\begin{theorem}\label{thm:ipmwxpriori2}
  Let $u\in H_0^2(\Omega)$ be the solution of problem~\eqref{fourthorderpertub}, and $u_{h}\in V_{h}$ be the discrete solution of the MWX element method~\eqref{mmwx}. Assume $u^0\in H_0^1(\Omega)\cap H^s(\Omega)$ with $2\leq s\leq 3$.
  We have
  \begin{equation}\label{eq:error4}
    \interleave u-u_{h}\interleave_{\varepsilon, h}\lesssim h\left((\varepsilon^{-1/2}+1)\|f\|_0+(\varepsilon+h)|u|_3\right),
  \end{equation}
  \begin{equation}\label{eq:error5}
    \interleave u-u_{h}\interleave_{\varepsilon, h}\lesssim \min\{\varepsilon^{1/2}, h^{1/2}\}\|f\|_0 +h^{\min\{s-1,\ell\}}\|u^0\|_{s},
  \end{equation}
  \begin{equation}\label{eq:error6}
    \| u^0-u_h\|_{\varepsilon, h}\lesssim \varepsilon^{1/2}\|f\|_0 +h^{\min\{s-1,\ell\}}\|u^0\|_{s}.
  \end{equation}
\end{theorem}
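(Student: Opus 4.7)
The plan is to mirror the proof of Theorem~\ref{thm:ipmwxpriori1}, adjusting only the steps where the Nitsche bilinear form $\tilde a_h$ and the space $V_h$ (in place of $V_{h0}$) enter. I would set $v_h := I_h u - u_h \in V_h$ and invoke the coercivity \eqref{eq:coercivity} to reduce the task to bounding $\varepsilon^2\tilde a_h(I_h u - u_h, v_h) + b_h(I_h u - u_h, v_h)$. Adding and subtracting $u$ splits this into an interpolation piece $\varepsilon^2\tilde a_h(I_h u - u, v_h) + b_h(I_h u - u, v_h)$, already controlled by \eqref{eq:Iherror5} and \eqref{eq:Iherror6}, and a consistency piece $\varepsilon^2\tilde a_h(u - u_h, v_h) + b_h(u - u_h, v_h)$ to be treated next.

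For the consistency piece I would substitute the discrete equation~\eqref{mmwx} for $u_h$ and test the continuous equation~\eqref{fourthorderpertub} against $E_h^L v_h \in H_0^1(\Omega)$. The key observation is that $\partial_n u = 0$ on $\partial\Omega$ makes the two Nitsche contributions of $\tilde a_h(u, v_h)$ that involve $\partial_n u$ (symmetrization and penalty) vanish identically, so only the extra boundary term $-\sum_{F\in\mathcal F_h^\partial}(\partial_{nn}^2 u, \partial_n v_h)_F$ survives beyond $a_h(u, v_h)$. Rearranging as in Theorem~\ref{thm:ipmwxpriori1} yields
\begin{align*}
    & \varepsilon^2\tilde a_h(u - u_h, v_h) + b_h(u - u_h, v_h) \\
  = & \bigl[\varepsilon^2\tilde a_h(u, v_h) + \varepsilon^2(\div\nabla^2 u, \nabla E_h^L v_h)\bigr] \\
    & {} + \bigl[b_h(u, v_h - E_h^L v_h) - (f, P_h v_h - E_h^L v_h)\bigr],
\end{align*}
which I would then bound bracket-by-bracket by \eqref{eq:consistcyerror-1}--\eqref{eq:consistcyerror-2} and \eqref{eq:nonconformestimate3-1}--\eqref{eq:nonconformestimate2-1}.

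The main obstacle is to see that \eqref{eq:consistcyerror-1}--\eqref{eq:consistcyerror-2} genuinely follow by the same pattern as Lemma~\ref{lem:consistcyerror0}, because $V_h$ only carries the weaker continuity \eqref{eq:mwxweakcontinuity}--\eqref{eq:mwxweakcontinuity-1} and in particular imposes no vanishing constraint on $\partial_n v_h$ across boundary faces. Integration by parts of $a_h(u,v_h)$ produces face sums $\sum_{F\in\mathcal F_h}((\nabla^2 u)n_F, \llbracket\nabla v_h\rrbracket)_F$. On interior faces the standard $Q_0^F$-projection argument of Lemma~\ref{lem:consistcyerror0} still applies via \eqref{eq:mwxweakcontinuity}. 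On a boundary face, the decomposition $\nabla v_h = \nabla_F v_h + (\partial_n v_h)n_F$ splits the integral into a tangential part, bounded through the zero-mean property \eqref{eq:mwxweakcontinuity-1} and the same $Q_0^F$ trick, and a normal--normal part $(\partial_{nn}^2 u, \partial_n v_h)_F$ that \emph{cancels exactly} against the extra Nitsche term carried by $\tilde a_h(u,v_h)$. One then uses \eqref{eq:EhLerror2} (in place of \eqref{eq:EhLerror1}) to pass from $\nabla_h v_h$ to $\nabla E_h^L v_h$; combined with the regularity \eqref{eq:regularity1} this delivers \eqref{eq:consistcyerror-1}--\eqref{eq:consistcyerror-2}, and the same recipe applied to $(f, P_h v_h - E_h^L v_h)$ produces \eqref{eq:nonconformestimate3-1}--\eqref{eq:nonconformestimate2-1}.

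Assembling the interpolation and consistency bounds yields both $\interleave I_h u - u_h\interleave_{\varepsilon,h} \lesssim h((\varepsilon^{-1/2}+1)\|f\|_0 + (\varepsilon + h)|u|_3)$ and $\interleave I_h u - u_h\interleave_{\varepsilon,h} \lesssim \min\{\varepsilon^{1/2}, h^{1/2}\}\|f\|_0 + h^{\min\{s-1,\ell\}}\|u^0\|_s$, and the triangle inequality together with \eqref{eq:IhError4}--\eqref{eq:IhError3} then gives \eqref{eq:error4} and \eqref{eq:error5}. Finally, \eqref{eq:error6} follows from $\|u^0 - u_h\|_{\varepsilon, h} \leq \|u^0 - u\|_{\varepsilon, h} + \interleave u - u_h\interleave_{\varepsilon, h}$, the bound $\|u^0 - u\|_{\varepsilon, h} \lesssim \varepsilon^{1/2}\|f\|_0$ deduced from \eqref{eq:regularity1}--\eqref{eq:regularity3}, and \eqref{eq:error5}.
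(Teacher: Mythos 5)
Your proposal is correct and follows essentially the same route as the paper: take $v_h = I_hu - u_h$, use the coercivity \eqref{eq:coercivity}, split into the interpolation part bounded by \eqref{eq:Iherror5}--\eqref{eq:Iherror6} and the consistency part bounded by \eqref{eq:consistcyerror-1}--\eqref{eq:consistcyerror-2} and \eqref{eq:nonconformestimate3-1}--\eqref{eq:nonconformestimate2-1}, then conclude via the triangle inequality with \eqref{eq:IhError4}--\eqref{eq:IhError3} and the regularity bounds \eqref{eq:regularity1}--\eqref{eq:regularity3}. Your explicit check that the boundary-face term $(\partial_{nn}^2u,\partial_n v_h)_F$ produced by integration by parts cancels the extra Nitsche term while the tangential part is handled through \eqref{eq:mwxweakcontinuity-1} and \eqref{eq:EhLerror2} is precisely the detail the paper leaves implicit in its appeal to ``the same argument as in Lemma~\ref{lem:consistcyerror0}''.
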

\begin{proof}
  Let $v_h=I_{h}u-u_{h}$. Adopting the similar argument as in the proof of Theorem~\ref{thm:ipmwxpriori1},
  we get from
  \eqref{eq:consistcyerror-1}-\eqref{eq:consistcyerror-2} and \eqref{eq:nonconformestimate3-1}-\eqref{eq:nonconformestimate2-1} that
  \[
    \varepsilon^2 \tilde a_h(u-u_{h},v_h)+b_h(u-u_{h},v_h)\lesssim h\left((\varepsilon^{-1/2}+1)\|f\|_0+\varepsilon|u|_3\right)\interleave v_h\interleave_{\varepsilon, h},
  \]
  \[
    \varepsilon^2 \tilde a_h(u-u_{h},v_h)+b_h(u-u_{h},v_h)
    \lesssim \left(\min\{\varepsilon^{1/2}, h^{1/2}\}\|f\|_0 +h^{\min\{s-1,\ell\}}\|u^0\|_{s}\right)\interleave v_h\interleave_{\varepsilon, h}.
  \]
  Together with \eqref{eq:Iherror5}-\eqref{eq:Iherror6}, we have
  \begin{align*}
             & \varepsilon^2\tilde a_h(I_{h}u-u_{h},v_h)+b_h(I_{h}u-u_{h},v_h)                                              \\
    \lesssim & h\left((\varepsilon^{-1/2}+1)\|f\|_0+(\varepsilon+h)|u|_3\right)\interleave v_h\interleave_{\varepsilon, h},
  \end{align*}
  \begin{align*}
             & \varepsilon^2\tilde a_h(I_{h}u-u_{h},v_h)+b_h(I_{h}u-u_{h},v_h)                                                                     \\
    \lesssim & \left(\min\{\varepsilon^{1/2}, h^{1/2}\}\|f\|_0 +h^{\min\{s-1,\ell\}}\|u^0\|_{s}\right)\interleave v_h\interleave_{\varepsilon, h}.
  \end{align*}
  Then we obtain from \eqref{eq:coercivity} that
  \[
    \interleave I_{h}u-u_{h}\interleave_{\varepsilon, h}\lesssim h\left((\varepsilon^{-1/2}+1)\|f\|_0+(\varepsilon+h)|u|_3\right),
  \]
  \[
    \interleave I_{h}u-u_{h}\interleave_{\varepsilon, h}\lesssim \min\{\varepsilon^{1/2}, h^{1/2}\}\|f\|_0 +h^{\min\{s-1,\ell\}}\|u^0\|_{s}.
  \]
  Therefore we conclude \eqref{eq:error4} and \eqref{eq:error5} from \eqref{eq:IhError4}-\eqref{eq:IhError3},
  and \eqref{eq:error6} from \eqref{eq:error5} and \eqref{eq:regularity1}-\eqref{eq:regularity3}.
\end{proof}

\section{Equivalent Formulations}
In this section, we will show some equivalent solver-friendly formulations of the MWX element methods~\eqref{mmwx0} and \eqref{mmwx}.

\begin{lemma}\label{lem:equivalence1-0}
  The MWX element method~\eqref{mmwx0}
  is equivalent to find
  $w_h\in W_h$ and $u_{h0}\in V_{h0}$ such that
  \begin{align}
    (\nabla w_h, \nabla \chi_h)                   & =(f, \chi_h)  \quad\quad\quad\quad\; \forall~\chi_h\in W_h, \label{mmwx0equiv1}  \\
    \varepsilon^2 a_h(u_{h0},v_h)+b_h(u_{h0},v_h) & =(\nabla w_h, \nabla_h v_h) \quad\;\; \forall~v_h\in V_{h0}. \label{mmwx0equiv2}
  \end{align}
\end{lemma}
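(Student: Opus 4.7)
The plan is to establish the equivalence by showing that the right-hand side $(f, P_h v_h)$ of \eqref{mmwx0} coincides with $(\nabla w_h, \nabla_h v_h)$ from \eqref{mmwx0equiv2} whenever $w_h \in W_h$ is the solution of the Poisson problem \eqref{mmwx0equiv1}. In other words, the whole argument rests on a single projection identity, after which both directions of the equivalence follow by substitution.

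The key step I will carry out first is the identity
\[
  (f, P_h v_h) = (\nabla w_h, \nabla_h v_h) \quad \forall~v_h \in V_{h0}.
\]
To obtain it, I insert $\chi_h := P_h v_h \in W_h$ as a test function in \eqref{mmwx0equiv1}, yielding $(f, P_h v_h) = (\nabla w_h, \nabla P_h v_h)$. Then, the defining property of $P_h$, applied with test function $\chi_h := w_h \in W_h$, rewrites $(\nabla P_h v_h, \nabla w_h)$ as $(\nabla_h v_h, \nabla w_h)$, and composing the two lines yields the claim.

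With this identity in hand, the equivalence follows in both directions. If $(w_h, u_{h0})$ solves \eqref{mmwx0equiv1}-\eqref{mmwx0equiv2}, substituting the right-hand side of \eqref{mmwx0equiv2} through the identity recovers \eqref{mmwx0}. Conversely, given a solution $u_{h0}$ of \eqref{mmwx0}, I define $w_h$ as the unique solution of the standard Lagrange–Poisson problem \eqref{mmwx0equiv1}, and the same identity rewrites \eqref{mmwx0} as \eqref{mmwx0equiv2}. Unique solvability on both sides, coming from the coercivity of $(\nabla\cdot,\nabla\cdot)$ on $W_h$ and of $\varepsilon^2 a_h + b_h$ on $V_{h0}$ in the norm $\|\cdot\|_{\varepsilon,h}$, ensures that $(w_h, u_{h0})$ is determined by $u_{h0}$ and vice versa.

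No substantive obstacle is expected: the argument hinges entirely on the projection identity above, which is an immediate consequence of the symmetry of the $H^1$ inner product on $W_h$ together with the defining equation of $P_h$. The only point worth checking is that $(\nabla w_h, \nabla_h v_h)$ is well defined even though $v_h$ is nonconforming; this is fine because $\nabla w_h \in L^2(\Omega;\mathbb{R}^d)$ and $\nabla_h v_h \in L^2(\Omega;\mathbb{R}^d)$ by construction.
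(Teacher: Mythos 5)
Your proof is correct and follows essentially the same route as the paper: both hinge on the single identity $(\nabla w_h,\nabla_h v_h)=(\nabla w_h,\nabla P_h v_h)=(f,P_h v_h)$, obtained by testing \eqref{mmwx0equiv1} with $\chi_h=P_hv_h$ and invoking the definition of $P_h$. Your additional remarks on unique solvability and well-definedness are fine but not needed beyond what the paper records.
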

\begin{proof}
  By the definition of the $H^1$-orthogonal projection $P_h$ and \eqref{mmwx0equiv1} with $\chi_h= P_hv_h$, the right hand side of \eqref{mmwx0equiv2}, it follows
  \[
    (\nabla w_h, \nabla_h v_h)=(\nabla w_h, \nabla P_hv_h)=(f, P_hv_h).
  \]
  Therefore the MWX element method~\eqref{mmwx0}
  is equivalent to the discrete method \eqref{mmwx0equiv1}-\eqref{mmwx0equiv2}.
\end{proof}

Similarly, we have the equivalent formulation of the MWX element method~\eqref{mmwx}.
\begin{lemma}\label{lem:equivalence1}
  The MWX element method~\eqref{mmwx}
  is equivalent to find
  $w_h\in W_h$ and $u_{h}\in V_h$ such that
  \begin{align}
    (\nabla w_h, \nabla \chi_h)                    & =(f, \chi_h)  \quad\quad\quad\quad\; \forall~\chi_h\in W_h, \label{mmwxequiv1} \\
    \varepsilon^2 \tilde a_h(u_h,v_h)+b_h(u_h,v_h) & =(\nabla w_h, \nabla_h v_h) \quad\;\; \forall~v_h\in V_h. \label{mmwxequiv2}
  \end{align}
\end{lemma}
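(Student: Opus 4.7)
The proof should mirror the argument of Lemma~\ref{lem:equivalence1-0} almost verbatim, since the change from $a_h$ to $\tilde a_h$ and from $V_{h0}$ to $V_h$ affects only the left-hand side of the discrete equation, whereas the equivalence argument operates entirely on the right-hand side. In particular, only the interaction between $w_h$, the projector $P_h$, and the load $(f,\cdot)$ matters for the reduction.

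My plan is to establish the two directions separately. For the forward direction, suppose $(w_h,u_h)\in W_h\times V_h$ solves \eqref{mmwxequiv1}--\eqref{mmwxequiv2}. Given any test function $v_h\in V_h$, the key point is that $P_hv_h\in W_h$ is an admissible test function in \eqref{mmwxequiv1}, so plugging $\chi_h=P_hv_h$ gives $(\nabla w_h,\nabla P_hv_h)=(f,P_hv_h)$. On the other hand, the defining identity of the $H^1$-orthogonal projection $P_h$, applied with the test function $w_h\in W_h$, yields $(\nabla_h v_h,\nabla w_h)=(\nabla P_hv_h,\nabla w_h)$. Chaining the two identities converts the right-hand side of \eqref{mmwxequiv2} into $(f,P_hv_h)$, recovering exactly \eqref{mmwx}. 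For the reverse direction, given $u_h\in V_h$ solving \eqref{mmwx}, I would define $w_h\in W_h$ as the unique solution of the Poisson-type problem \eqref{mmwxequiv1}, whose well-posedness follows from Lax--Milgram on $W_h\subset H_0^1(\Omega)$; then the same two identities show that \eqref{mmwxequiv2} holds.

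There is essentially no technical obstacle here, since the only ingredient beyond a manipulation of definitions is the well-posedness of the Poisson problem on $W_h$; the crucial observation to highlight is that the proof of Lemma~\ref{lem:equivalence1-0} never uses any specific property of $a_h$ or of the boundary condition imposed in $V_{h0}$, so the exact same chain of equalities works when $a_h$ is replaced by $\tilde a_h$ and $V_{h0}$ by $V_h$.
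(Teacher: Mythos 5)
Your proposal is correct and follows essentially the same route as the paper: the paper proves the analogous Lemma for~\eqref{mmwx0} by testing~\eqref{mmwx0equiv1} with $\chi_h=P_hv_h$ and invoking the definition of $P_h$ to rewrite $(\nabla w_h,\nabla_h v_h)$ as $(f,P_hv_h)$, then states the present Lemma as an immediate analogue. Your version merely spells out the two directions and the well-posedness of~\eqref{mmwxequiv1} a bit more explicitly, which is fine.
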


In two dimensions, we can further decouple the discrete methods \eqref{mmwx0equiv2} and \eqref{mmwxequiv2} into the discrete methods of two Poisson equations and one Brinkman problem.
To this end, define the vectorial nonconforming $P_1$ element space
\begin{align*}
  V_h^{CR}:=\Big\{v\in L^2(\Omega;\mathbb R^2): & ~v|_K\in\mathbb P_1(K;\mathbb R^2)\textrm{ for each } K\in\mathcal T_h, \int_F\llbracket v\rrbracket\dd s=0 \textrm{ for }              \\
                                                & \quad\; \textrm{ each } F\in\mathcal F_h^i, \textrm{ and } \int_Fv\cdot n\dd s=0 \textrm{ for each } F\in\mathcal F_h^{\partial}\Big\},
\end{align*}
\[
  V_{h0}^{CR}:=\Big\{v\in V_h^{CR}: \int_Fv\dd s=0 \textrm{ for each } F\in\mathcal F_h^{\partial}\Big\}.
\]
And let $\mathcal Q_h\subset L_0^2(\Omega)$ be the piecewise constant space with respect to $\mathcal T_h$, where $L_0^2(\Omega)$ is the subspace of $L^2(\Omega)$ with vanishing mean value.

Due to Theorem 4.1 in \cite{FalkMorley1990}, we have the following relationship between Morley element spaces and vectorial Crouzeix-Raviart element spaces
\begin{equation}\label{eq:morleycr0}
  \curl_h V_{h0}=\{v_h\in V_{h0}^{CR}: \div_h v_h=0\},
\end{equation}
\begin{equation}\label{eq:morleycr}
  \curl_h V_h=\{v_h\in V_h^{CR}: \div_h v_h=0\}.
\end{equation}

\begin{lemma}\label{lem:equivalence2-0}
  In two dimensions,
  the discrete method \eqref{mmwx0equiv2} can be decoupled into
  two Morley element methods of Poisson equation and one nonconforming $P_1$-$P_0$ element method of Brinkman problem, i.e.,
  find $(z_h, \phi_h, p_h, w_h)\in V_{h}\times V_{h0}^{CR}\times \mathcal Q_{h}\times V_{h}$ such that
  \begin{subequations}
    \begin{align}
      (\curl_h z_h, \curl_h v_h)                                                           & =(\nabla w_h, \nabla_h v_h)\quad\, \forall~v_h\in V_{h},  \label{eq:4thpurbmfem0a}           \\
      (\phi_h, \psi_h)+\varepsilon^2(\nabla_h\phi_h, \nabla_h\psi_h) + (\div_h\psi_h, p_h) & =(\curl_h z_h, \psi_h)
      \quad \forall~\psi_h\in  V_{h0}^{CR}, \label{eq:4thpurbmfem0b}                                                                                                                      \\
      (\div_h\phi_h, q_h)                                                                  & = 0\quad\quad\quad\quad\quad\quad\, \forall~q_h\in \mathcal Q_{h},  \label{eq:4thpurbmfem0c} \\
      (\curl_h u_{h0}, \curl_h \chi_h)                                                     & = (\phi_h, \curl_h\chi_h) \quad \forall~\chi_h\in V_{h}. \label{eq:4thpurbmfem0d}
    \end{align}
  \end{subequations}
\end{lemma}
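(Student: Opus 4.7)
The plan rests on the two-dimensional rotational identities $(\nabla u, \nabla v) = (\curl u, \curl v)$ and $(\nabla^2 u, \nabla^2 v) = (\nabla \curl u, \nabla \curl v)$, which follow from $\curl = R \nabla$ with $R$ the $\pi/2$-rotation, together with the discrete Stokes complex relation \eqref{eq:morleycr0}.

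First I would set $\phi_h := \curl_h u_{h0}$. Then \eqref{eq:morleycr0} gives $\phi_h \in V_{h0}^{CR}$ and $\div_h \phi_h = 0$, which is exactly \eqref{eq:4thpurbmfem0c}. Applying the rotational identities elementwise yields
\[
  a_h(u_{h0}, v_h) = (\nabla_h \phi_h, \nabla_h \curl_h v_h), \qquad b_h(u_{h0}, v_h) = (\phi_h, \curl_h v_h),
\]
so \eqref{mmwx0equiv2} becomes $\varepsilon^2 (\nabla_h \phi_h, \nabla_h \curl_h v_h) + (\phi_h, \curl_h v_h) = (\nabla w_h, \nabla_h v_h)$. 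Next I would introduce $z_h$ through \eqref{eq:4thpurbmfem0a}; this is essentially a Morley discretisation of a Poisson problem (after undoing the rotation) and is well posed because $\curl_h$ is injective on the MWX space with the relevant boundary conditions. Its role is purely to rewrite the right-hand side as $(\curl_h z_h, \curl_h v_h)$. Setting $\psi_h := \curl_h v_h$ and using \eqref{eq:morleycr0} to identify the range of $\curl_h$ on $V_{h0}$ with the divergence-free subspace of $V_{h0}^{CR}$, the relation reads
\[
  \varepsilon^2(\nabla_h \phi_h, \nabla_h \psi_h) + (\phi_h, \psi_h) = (\curl_h z_h, \psi_h) \quad \forall\,\psi_h \in V_{h0}^{CR} \text{ with } \div_h \psi_h = 0.
\]
Adjoining $p_h \in \mathcal Q_h$ as a Lagrange multiplier for the divergence constraint extends the admissible test space to all of $V_{h0}^{CR}$ and produces \eqref{eq:4thpurbmfem0b}. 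Finally \eqref{eq:4thpurbmfem0d} merely encodes the defining relation $\phi_h = \curl_h u_{h0}$, thereby recovering $u_{h0}$ from $\phi_h$.

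The delicate point is the reverse implication: given a solution $(z_h, \phi_h, p_h, u_{h0})$ of the decoupled system, recover a solution of \eqref{mmwx0equiv2}. The key input is the discrete inf-sup stability of the Crouzeix-Raviart-$P_0$ Stokes pair $(V_{h0}^{CR}, \mathcal Q_h)$, which makes the saddle-point block \eqref{eq:4thpurbmfem0b}-\eqref{eq:4thpurbmfem0c} well posed and guarantees $\div_h \phi_h = 0$. Then \eqref{eq:morleycr0} lets us write $\phi_h = \curl_h \widetilde u_{h0}$ for a unique $\widetilde u_{h0} \in V_{h0}$; equation \eqref{eq:4thpurbmfem0d} identifies $\widetilde u_{h0}$ with $u_{h0}$, and testing \eqref{eq:4thpurbmfem0b} with $\psi_h = \curl_h v_h$ (which annihilates the $p_h$ term thanks to $\div_h \curl_h v_h = 0$) followed by undoing the rotation via the same two identities reconstructs \eqref{mmwx0equiv2}.
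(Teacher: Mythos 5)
Your proof is correct and follows essentially the same route as the paper: the $\pi/2$-rotation identities, the discrete Stokes complex relation \eqref{eq:morleycr0} to identify $\phi_h=\curl_h u_{h0}$ with $u_{h0}\in V_{h0}$, and the substitution $\psi_h=\curl_h v_h$ in the Brinkman block. You are in fact somewhat more complete than the paper, which only writes out the direction from the decoupled system back to \eqref{mmwx0equiv2}, whereas you also justify the forward direction by constructing $p_h$ via the Crouzeix--Raviart--$P_0$ inf-sup condition.
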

\begin{proof}
  Thanks to \eqref{eq:morleycr0}, it follows from \eqref{eq:4thpurbmfem0c}-\eqref{eq:4thpurbmfem0d} that
  \[
    \phi_h=\curl_h u_{h0} \quad\textrm{ and } \quad u_{h0}\in V_{h0}.
  \]
  For any $v_h\in V_{h0}$, it is apparent that
  \[
    (\nabla_h\curl_h u_{h0}, \nabla_h\curl_h v_h)=a_h(u_{h0}, v_h).
  \]
  Then replacing $\phi_h$ with $\curl_h u_{h0}$ and $\psi_h$ with $\curl_h v_h$ in \eqref{eq:4thpurbmfem0b}, we achieve
  \[
    \varepsilon^2 a_h(u_{h0},v_h)+b_h(u_{h0},v_h) =(\curl_h z_h, \curl_h v_h),
  \]
  which combined with \eqref{eq:4thpurbmfem0a} induces \eqref{mmwx0equiv2}.
\end{proof}

Similarly, we get the decoupling of the discrete method \eqref{mmwxequiv2} based on \eqref{eq:morleycr}.
\begin{lemma}\label{lem:equivalence2}
  In two dimensions,
  the discrete method \eqref{mmwxequiv2} can be decoupled into
  two Morley element methods of Poisson equation and one nonconforming $P_1$-$P_0$ element method of Brinkman problem, i.e.,
  find $(z_h, \phi_h, p_h, w_h)\in V_{h}\times V_h^{CR}\times \mathcal Q_{h}\times V_{h}$ such that
  \begin{subequations}
    \begin{align}
      (\curl_h z_h, \curl_h v_h)                                              & =(\nabla w_h, \nabla_h v_h)\quad\;\; \forall~v_h\in V_{h},  \label{eq:4thpurbmfema}           \\
      (\phi_h, \psi_h)+\varepsilon^2c_h(\phi_h, \psi_h) + (\div_h\psi_h, p_h) & =(\curl_h z_h, \psi_h)
      \quad\;\, \forall~\psi_h\in  V_h^{CR}, \label{eq:4thpurbmfemb}                                                                                                          \\
      (\div_h\phi_h, q_h)                                                     & = 0\quad\quad\quad\quad\quad\quad\;\; \forall~q_h\in \mathcal Q_{h},  \label{eq:4thpurbmfemc} \\
      (\curl_h u_h, \curl_h \chi_h)                                           & = (\phi_h, \curl_h\chi_h) \quad\; \forall~\chi_h\in V_{h}, \label{eq:4thpurbmfemd}
    \end{align}
  \end{subequations}
  where
  \begin{align*}
    c_h(\phi_h, \psi_h):= & (\nabla_h\phi_h, \nabla_h\psi_h) - \sum_{F \in \mathcal{F}_{h}^{\partial}}(\partial_{n}(\phi_h\cdot t), \psi_h\cdot t)_F - \sum_{F \in \mathcal{F}_{h}^{\partial}}(\phi_h\cdot t, \partial_{n}(\psi_h\cdot t))_F \\
                          & +\sum_{F \in \mathcal{F}_{h}^{\partial}}\frac{\sigma}{h_{F}}(\phi_h\cdot t, \psi_h\cdot t)_F.
  \end{align*}
\end{lemma}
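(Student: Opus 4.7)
The plan is to mirror the proof of Lemma~\ref{lem:equivalence2-0}, with the additional task of verifying that the Nitsche boundary corrections in $c_h$ translate, under the substitution $\phi_h=\curl_h u_h$ and $\psi_h=\curl_h v_h$, into the Nitsche boundary corrections that distinguish $\tilde a_h$ from $a_h$.

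First I would use the discrete Stokes complex identity \eqref{eq:morleycr} together with \eqref{eq:4thpurbmfemc}--\eqref{eq:4thpurbmfemd} to extract $u_h$. Taking $\chi_h=u_h$ in \eqref{eq:4thpurbmfemd} shows that $\phi_h-\curl_h u_h$ is $L^2$-orthogonal to $\curl_h V_h$; together with \eqref{eq:4thpurbmfemc} and \eqref{eq:morleycr}, this forces $\phi_h=\curl_h u_h$, and in particular $u_h\in V_h$ is uniquely determined (up to the usual kernel of $\curl_h$, which is trivial on $V_h$ modulo constants; here it is handled exactly as in Lemma~\ref{lem:equivalence2-0}).

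Next I would record the two--dimensional pointwise identities that drive everything. With the convention $\curl v=(\partial_{2}v,-\partial_{1}v)$ and $t=(-n_{2},n_{1})$, direct computation gives
\begin{equation*}
  \curl_h u_h\cdot t=\partial_n u_h,\qquad |\nabla_h\curl_h u_h|^2=|\nabla_h^2 u_h|^2,
\end{equation*}
and hence $(\nabla_h\curl_h u_h,\nabla_h\curl_h v_h)=(\nabla_h^2 u_h,\nabla_h^2 v_h)$ as well as $\partial_n(\curl_h u_h\cdot t)=\partial_{nn}^2 u_h$ on each $F\in\mathcal{F}_h^{\partial}$. Feeding these identities termwise into the definition of $c_h$ yields
\begin{equation*}
  c_h(\curl_h u_h,\curl_h v_h)=\tilde a_h(u_h,v_h)\quad\forall~u_h,v_h\in V_h,
\end{equation*}
and trivially $(\curl_h u_h,\curl_h v_h)=b_h(u_h,v_h)$.

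Finally I would substitute $\psi_h=\curl_h v_h$ into \eqref{eq:4thpurbmfemb} for an arbitrary $v_h\in V_h$; since $\curl_h v_h\in V_h^{CR}$ by \eqref{eq:morleycr} and $\div_h\curl_h v_h=0$, the pressure term drops out and the previous identities collapse \eqref{eq:4thpurbmfemb} to $\varepsilon^2\tilde a_h(u_h,v_h)+b_h(u_h,v_h)=(\curl_h z_h,\curl_h v_h)$. Combining with \eqref{eq:4thpurbmfema} recovers exactly \eqref{mmwxequiv2}. The main technical point to verify carefully is that every boundary term in $c_h$ lands on the correct Nitsche term of $\tilde a_h$; once the tangential--normal conversions $\curl_h u_h\cdot t=\partial_n u_h$ and $\partial_n(\curl_h u_h\cdot t)=\partial_{nn}^2 u_h$ are in place, the rest is bookkeeping analogous to Lemma~\ref{lem:equivalence2-0}.
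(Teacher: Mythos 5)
Your proposal is correct and takes essentially the same route as the paper, which proves Lemma~\ref{lem:equivalence2} by repeating the substitution argument of Lemma~\ref{lem:equivalence2-0} with \eqref{eq:morleycr} in place of \eqref{eq:morleycr0} and leaves the verification $c_h(\curl_h u_h,\curl_h v_h)=\tilde a_h(u_h,v_h)$ implicit, which is exactly the bookkeeping you carry out. (Only a cosmetic remark: with your stated conventions $\curl v=(\partial_2 v,-\partial_1 v)$ and $t=(-n_2,n_1)$ one actually gets $\curl v\cdot t=-\partial_n v$, but since every boundary term of $c_h$ is a product of two such factors the sign cancels and the identity is unaffected.)
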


Combining Lemma~\ref{lem:equivalence1-0} and Lemma~\ref{lem:equivalence2-0} yields an equivalent discrete method of the MWX element method~\eqref{mmwx0}.
And combining Lemma~\ref{lem:equivalence1} and Lemma~\ref{lem:equivalence2} yields an equivalent discrete method of the MWX element method~\eqref{mmwx}.
\begin{theorem}\label{thm:equivalence3-0}
  In two dimensions,
  the MWX element method~\eqref{mmwx0}
  can be decoupled into \eqref{mmwx0equiv1} and \eqref{eq:4thpurbmfem0a}-\eqref{eq:4thpurbmfem0d}.
  That is, the MWX element method~\eqref{mmwx0} can be decoupled into one Lagrange element method of Poisson equation,
  two Morley element methods of Poisson equation and one nonconforming $P_1$-$P_0$ element method of Brinkman problem.
\end{theorem}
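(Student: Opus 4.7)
The proof plan is essentially a chain-application of the two immediately preceding lemmas, so the work has already been done. The first step is to invoke Lemma~\ref{lem:equivalence1-0}: the MWX element method~\eqref{mmwx0} is equivalent to the coupled system consisting of the Lagrange discretization~\eqref{mmwx0equiv1} for $w_h \in W_h$ together with the fourth-order discrete problem~\eqref{mmwx0equiv2} for $u_{h0}\in V_{h0}$, driven by $(\nabla w_h,\nabla_h v_h)$ on its right-hand side.

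The second step is to feed~\eqref{mmwx0equiv2} into Lemma~\ref{lem:equivalence2-0}, which in two dimensions decouples~\eqref{mmwx0equiv2} into two Morley discretizations of Poisson problems~\eqref{eq:4thpurbmfem0a} and~\eqref{eq:4thpurbmfem0d} and a nonconforming $P_1$-$P_0$ Brinkman discretization~\eqref{eq:4thpurbmfem0b}-\eqref{eq:4thpurbmfem0c}. Since the $w_h$ appearing on the right-hand side of~\eqref{eq:4thpurbmfem0a} is precisely the same Lagrange function determined by~\eqref{mmwx0equiv1}, the two equivalences paste together without any compatibility issue.

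There is essentially no obstacle to overcome: uniqueness of $w_h$ in the Lagrange Poisson problem~\eqref{mmwx0equiv1} (from the standard ellipticity of $(\nabla\cdot,\nabla\cdot)$ on $W_h\subset H_0^1(\Omega)$) guarantees that chaining the two lemmas does not introduce spurious solutions, and the decoupling in Lemma~\ref{lem:equivalence2-0} is an equivalence for any fixed right-hand side of~\eqref{mmwx0equiv2}. Thus the combined system~\eqref{mmwx0equiv1} together with~\eqref{eq:4thpurbmfem0a}-\eqref{eq:4thpurbmfem0d} is equivalent to~\eqref{mmwx0}, and the four indicated subproblems are exactly one Lagrange Poisson solve, two Morley Poisson solves, and one nonconforming $P_1$-$P_0$ Brinkman solve, as claimed.
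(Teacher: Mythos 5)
Your proposal is correct and follows exactly the paper's route: the paper proves this theorem simply by combining Lemma~\ref{lem:equivalence1-0} with Lemma~\ref{lem:equivalence2-0}, which is precisely your chain of the two preceding equivalences. The extra remark about uniqueness of $w_h$ ensuring no spurious solutions is a harmless elaboration of what the paper leaves implicit.
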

\begin{theorem}\label{thm:equivalence3}
  In two dimensions,
  the MWX element method~\eqref{mmwx} can be decoupled into one Lagrange element method of Poisson equation \eqref{mmwxequiv1},
  two Morley element methods of Poisson equation and one nonconforming $P_1$-$P_0$ element method of Brinkman problem \eqref{eq:4thpurbmfema}-\eqref{eq:4thpurbmfemd}.
\end{theorem}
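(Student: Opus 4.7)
The plan is simply to chain together the two equivalences already established and identify the four promised sub-discretizations. By Lemma~\ref{lem:equivalence1}, the MWX element method \eqref{mmwx} is equivalent to the coupled system \eqref{mmwxequiv1}-\eqref{mmwxequiv2}. The equation \eqref{mmwxequiv1} is, by inspection, a standard $H^1$-conforming Lagrange element discretization of the Poisson equation $-\Delta w = f$ with homogeneous Dirichlet data, which accounts for the first of the four promised sub-problems.

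Next, I would invoke Lemma~\ref{lem:equivalence2} to split the remaining fourth-order-type problem \eqref{mmwxequiv2}, whose left-hand bilinear form is $\varepsilon^2 \tilde a_h + b_h$, into the block system \eqref{eq:4thpurbmfema}-\eqref{eq:4thpurbmfemd}: the equations \eqref{eq:4thpurbmfema} and \eqref{eq:4thpurbmfemd} are two Morley element Poisson discretizations for the auxiliary variables $z_h$ and $u_h$, while \eqref{eq:4thpurbmfemb}-\eqref{eq:4thpurbmfemc} form a nonconforming $P_1$-$P_0$ discretization of a Brinkman saddle-point problem in which $c_h$ encodes the Nitsche boundary treatment. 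Substituting this decoupling back into \eqref{mmwxequiv1}-\eqref{mmwxequiv2} immediately yields the four-block structure claimed in the theorem.

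The only point that requires care, and the natural candidate for the main obstacle, is verifying that the boundary terms of $c_h$ genuinely reproduce those of $\tilde a_h$ when one plugs in $\phi_h = \curl_h u_h$ and $\psi_h = \curl_h v_h$ under the rotation relation \eqref{eq:morleycr}. Writing $\curl u = (\partial_2 u, -\partial_1 u)^{T}$ and $t = (-n_2, n_1)^{T}$ on a boundary edge, a direct calculation gives $(\curl u)\cdot t = -\partial_n u$ and hence $\partial_n((\curl u)\cdot t) = -\partial_{nn}^2 u$; the signs cancel pairwise so that the consistency terms of $c_h$ match the corresponding terms of $\tilde a_h$, and the penalty term $(\sigma/h_F)(\phi_h\cdot t,\psi_h\cdot t)_F$ matches $(\sigma/h_F)(\partial_n u_h,\partial_n v_h)_F$. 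Since this identification is already implicit in the statement of Lemma~\ref{lem:equivalence2}, no additional work is needed and the theorem follows by assembling the two lemmas.
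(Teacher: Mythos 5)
Your proof is correct and follows exactly the paper's route: the paper also obtains Theorem~\ref{thm:equivalence3} simply by combining Lemma~\ref{lem:equivalence1} with Lemma~\ref{lem:equivalence2}, giving no further argument. Your extra check that $(\curl u)\cdot t=-\partial_n u$ makes the boundary terms of $c_h$ match those of $\tilde a_h$ is a correct detail the paper leaves implicit.
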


The decoupling of the fourth order elliptic singular perturbation problem \eqref{fourthorderpertub} into two Poisson equations and one Brinkman problem in the continuous level have been developed in \cite{ChenHuang2018,Gallistl2017}.
The decoupling of the Morley element method of the biharmonic equation into two Morley element methods of Poisson equation and one nonconforming $P_1$-$P_0$ element method of Stokes equation was firstly discovered in \cite{Huang2010}.

When $\varepsilon$ is very small, the stiffness matrix of the MWX element method~\eqref{mmwx0} is very close to the stiffness matrix of the MWX element method for the Poisson equation, which can be efficiently solved by the CG method with AMG as the preconditioner.

When $\varepsilon\eqsim 1$, the equivalences in Theorems~\ref{thm:equivalence3-0}-\ref{thm:equivalence3} will induce efficient and robust Poisson based solvers for the MWX element method~\eqref{mmwx0} and \eqref{mmwx}.
The Lagrange element methods of Poisson equation~\eqref{mmwx0equiv1} and \eqref{mmwxequiv1},
the Morley element methods of Poisson equation~\eqref{eq:4thpurbmfem0a}, \eqref{eq:4thpurbmfem0d}, \eqref{eq:4thpurbmfema} and \eqref{eq:4thpurbmfemd} can be solved by CG method with the auxiliary space preconditioner \cite{Xu1996},
in which the $H^1$ conforming linear element discretization on the same mesh for the
Poisson equation can be adopted as the auxiliary problem.
And the AMG method is used to solve the auxiliary problem.
If $\ell=1$, the Lagrange element methods of Poisson equation~\eqref{mmwx0equiv1} and \eqref{mmwxequiv1} are the linear Lagrange element methods, which can be solved efficiently by CG method using the classical AMG method as the preconditioner.
As for the nonconforming $P_1$-$P_0$ element methods of Brinkman problem~\eqref{eq:4thpurbmfem0b}-\eqref{eq:4thpurbmfem0c} and \eqref{eq:4thpurbmfemb}-\eqref{eq:4thpurbmfemc}, we can use the block-diagonal preconditioner in \cite{OlshanskiiPetersReusken2006,MardalWinther2004,CahouetChabard1988} or the approximate block-factorization preconditioner in \cite{ChenHuHuang2018}, which are robust with respect to the parameter $\varepsilon$ and mesh size $h$.
We will adopt the following approximate block-factorization preconditioner in the numerical part
\begin{align}
  \begin{pmatrix}
    A_h-B_h^T\widetilde M_h^{-1}B_h & B_h^T           \\
    B_h                             & -\widetilde M_h
  \end{pmatrix}^{-1} & =\begin{pmatrix}
    I                      & 0  \\
    \widetilde M_h^{-1}B_h & -I
  \end{pmatrix}^{-1}\begin{pmatrix}
    A_h & -B_h^T         \\
    0   & \widetilde M_h
  \end{pmatrix}^{-1} \notag       \\
                                  & =\begin{pmatrix}
    I                      & 0  \\
    \widetilde M_h^{-1}B_h & -I
  \end{pmatrix}\begin{pmatrix}
    A_h & -B_h^T         \\
    0   & \widetilde M_h
  \end{pmatrix}^{-1}, \label{eq:abfp}
\end{align}
where $\begin{pmatrix}
    A_h & B_h^T \\
    B_h & O
  \end{pmatrix}$ is the stiffness matrix of the Brinkman equation~\eqref{eq:4thpurbmfem0b}-\eqref{eq:4thpurbmfem0c}, and $\widetilde M_h=\alpha\varepsilon^{-2}M_h$ with $\alpha>0$ and $M_h$ being the mass matrix for the pressure.
Since all these solvers are based on the solvers of the Poisson equation and we use AMG method to solve the discrete methods of the Poisson equation,
the designed fast solver of the MWX element method~\eqref{mmwx0} (the MWX element method~\eqref{mmwx}) based on the discrete method \eqref{mmwx0equiv1} and \eqref{eq:4thpurbmfem0a}-\eqref{eq:4thpurbmfem0d} (the discrete method \eqref{mmwxequiv1} and \eqref{eq:4thpurbmfema}-\eqref{eq:4thpurbmfemd}) also works for the shape-regular unstructured meshes.

\section{Numerical Results}
In this section, we will provide some numerical examples to verify the theoretical convergence rates of the MWX element method~\eqref{mmwx0} and \eqref{mmwx}, and test the efficiency of a solver based on the decoupled method \eqref{eq:4thpurbmfem0a}-\eqref{eq:4thpurbmfem0d}.
Let $\Omega$ be the unit square $(0,1)^2$, and we use the uniform triangulation of $\Omega$. All the experiments are implemented with the scikit-fem library \cite{skfem2020}.

\begin{example}\label{numericalexm1}\rm 
  We first test the MWX element method~\eqref{mmwx0} with the exact solution
  \[
    u(x,y)=\sin^2(\pi x)\sin^2(\pi y).
  \]
  The right hand side $f$ is computed from \eqref{fourthorderpertub}. Notice that the solution $u$ does not have boundary layers. Take $\ell=1$.

  The energy error $\|u-u_{h0}\|_{\varepsilon, h}$ with different $\varepsilon$ and $h$ is shown in Table~\ref{ex1p1nopen}. From Table~\ref{ex1p1nopen} we observe that $\|u-u_{h0}\|_{\varepsilon, h}=O(h)$ for $\varepsilon=1, 10^{-1}, 10^{-2}$, which agrees with the theoretical convergence result \eqref{eq:error1}.
  While numerically $\|u-u_{h0}\|_{\varepsilon, h}=O(h^2)$ for $\varepsilon=10^{-3}, 10^{-4}, 10^{-5}$ in Table~\ref{ex1p1nopen}, which is superconvergent and one order higher than the theoretical convergence result \eqref{eq:error0}.

  Then examine the efficiency of solvers for the MWX element method~\eqref{mmwx0}. The stop criterion in our iterative algorithms is the relative residual is less than $10^{-8}$. And the initial guess is zero.
  First, we solve the MWX element method~\eqref{mmwx0equiv1}-\eqref{mmwx0equiv2} using the CG method with AMG method (AMG-CG) as the preconditioner. According to the iteration steps listed in the third column of Tables~\ref{tab:exm1gmreseps1}-\ref{tab:exm1gmreseps01}, equation \eqref{mmwx0equiv1} is highly efficiently solved by the AMG-CG solver.
From Table~\ref{tab:exm1pcg} we can see  that the AMG-CG solver is very efficient for $\varepsilon=10^{-4}, 10^{-5}$, and the iteration steps of the AMG-CG solver is also acceptable for $\varepsilon=10^{-2}, 10^{-3}$.

  However the AMG-CG solver deteriorates for $\varepsilon=1, 10^{-1}$, which means the AMG-CG solver doesn't work for large $\varepsilon$. To deal with this, we adopt a solver for the MWX element method~\eqref{mmwx0equiv2} based on the equivalent decoupling \eqref{eq:4thpurbmfem0a}-\eqref{eq:4thpurbmfem0d}.
  To be specific, we adopt the AMG-CG solver to solve equations \eqref{eq:4thpurbmfem0a} and \eqref{eq:4thpurbmfem0d}, and the GMRES method with the preconditioner \eqref{eq:abfp}, in which the parameter $\alpha=2$, the restart in the GMRES is $20$, and AMG is used to approximate the inverse of $A_h$. By the iteration steps listed in Tables~\ref{tab:exm1gmreseps1}-\ref{tab:exm1gmreseps01}, the AMG-CG solver is highly efficient for solving equations \eqref{eq:4thpurbmfem0a} and \eqref{eq:4thpurbmfem0d}. And the preconditioned GMRES algorithm is also very efficient and robust for the nonconforming $P_1$-$P_0$ element method of Brinkman problem~\eqref{eq:4thpurbmfem0b}-\eqref{eq:4thpurbmfem0c} for $\varepsilon=1, 10^{-1}$.

  In summary, to efficiently solve the MWX method~\eqref{mmwx0} we can employ the AMG-CG solver for small $\varepsilon$ and the GMRES method with the preconditioner \eqref{eq:abfp} for large $\varepsilon$.

  \begin{table}[htbp]
    \centering
    \caption{Error $\|u-u_{h0}\|_{\varepsilon, h}$ of the MWX method~\eqref{mmwx0} for Example~\ref{numericalexm1} with different $\varepsilon$ and $h$}
    \resizebox{\textwidth}{!}{
      \begin{tabular}{ccccccccc}
        \toprule
        \diagbox{$\varepsilon$}{$h$} & $2^{-1}$  & $2^{-2}$  & $2^{-3}$  & $2^{-4}$  & $2^{-5}$  & $2^{-6}$  & $2^{-7}$  & $2^{-8}$  \\
        \midrule
        1                            & 1.232E+01 & 7.584E+00 & 3.839E+00 & 1.896E+00 & 9.433E-01 & 4.710E-01 & 2.354E-01 & 1.177E-01 \\
                                     & -         & 0.70      & 0.98      & 1.02      & 1.01      & 1.00      & 1.00      & 1.00      \\
        $10^{-1}$                    & 1.617E+00 & 1.024E+00 & 4.386E-01 & 1.977E-01 & 9.539E-02 & 4.723E-02 & 2.356E-02 & 1.177E-02 \\
                                     & -         & 0.66      & 1.22      & 1.15      & 1.05      & 1.01      & 1.00      & 1.00      \\
        $10^{-2}$                    & 1.148E+00 & 7.291E-01 & 2.383E-01 & 6.564E-02 & 1.820E-02 & 6.062E-03 & 2.537E-03 & 1.200E-03 \\
                                     & -         & 0.66      & 1.61      & 1.86      & 1.85      & 1.59      & 1.26      & 1.08      \\
        $10^{-3}$                    & 1.143E+00 & 7.260E-01 & 2.371E-01 & 6.477E-02 & 1.665E-02 & 4.202E-03 & 1.057E-03 & 2.761E-04 \\
                                     & -         & 0.65      & 1.61      & 1.87      & 1.96      & 1.99      & 1.99      & 1.94      \\
        $10^{-4}$                    & 1.143E+00 & 7.260E-01 & 2.371E-01 & 6.477E-02 & 1.666E-02 & 4.205E-03 & 1.055E-03 & 2.641E-04 \\
                                     & -         & 0.65      & 1.61      & 1.87      & 1.96      & 1.99      & 1.99      & 2.00      \\
        $10^{-5}$                    & 1.143E+00 & 7.260E-01 & 2.371E-01 & 6.477E-02 & 1.666E-02 & 4.205E-03 & 1.055E-03 & 2.642E-04 \\
                                     & -         & 0.65      & 1.61      & 1.87      & 1.96      & 1.99      & 1.99      & 2.00      \\
        \bottomrule
      \end{tabular}%
    }
    \label{ex1p1nopen}%
  \end{table}%

  \begin{table}[htbp]
    \centering
    \caption{Iteration steps of the MWX  methods~\eqref{mmwx0} for Example~\ref{numericalexm1} with different $\varepsilon$ amd $h$}
    \begin{tabular}{ccccccccc}
      \toprule
      \diagbox{$\varepsilon$}{$h$} & $2^{-1}$ & $2^{-2}$ & $2^{-3}$ & $2^{-4}$ & $2^{-5}$ & $2^{-6}$ & $2^{-7}$ & $2^{-8}$ \\
      \midrule
      1                            & 1        & 6        & 12       & 24       & 49       & 104      & 241      & $>1000$  \\
      $10^{-1}$                    & 1        & 5        & 9        & 18       & 39       & 84       & 183      & 484      \\
      $10^{-2}$                    & 1        & 3        & 5        & 7        & 8        & 15       & 30       & 62       \\
      $10^{-3}$                    & 1        & 3        & 5        & 6        & 7        & 9        & 15       & 59       \\
      $10^{-4}$                    & 1        & 3        & 5        & 6        & 7        & 8        & 10       & 10       \\
      $10^{-5}$                    & 1        & 3        & 5        & 6        & 7        & 8        & 10       & 10       \\
      \midrule
      \#dofs                       & 25       & 81       & 289      & 1089     & 4225     & 16641    & 66049    & 263169   \\
      \bottomrule
    \end{tabular}%
    \label{tab:exm1pcg}%
  \end{table}%


  \begin{table}[htbp]
    \centering
    \caption{Iteration steps of the decoupled methods~\eqref{eq:4thpurbmfem0a}-\eqref{eq:4thpurbmfem0d} for Example~\ref{numericalexm1} with $\varepsilon=1$ and different $h$}
    \begin{tabular}{cccccc}
      \toprule
      \multirow{2}[4]{*}{$h$} & \multirow{2}[4]{*}{\#dofs} & Eq.\eqref{mmwx0equiv1} & Eq.\eqref{eq:4thpurbmfem0a} & \multicolumn{1}{l}{Eq.\eqref{eq:4thpurbmfem0b}-\eqref{eq:4thpurbmfem0c}} & Eq.\eqref{eq:4thpurbmfem0d} \\
      \cmidrule{3-6}          &                            & steps                  & steps                       & steps                                                                    & steps                       \\
      \midrule
      $2^{-1}$                & 24                         & 1                      & 1                           & 16                                                                       & 1                           \\
      $2^{-2}$                & 112                        & 1                      & 4                           & 27                                                                       & 3                           \\
      $2^{-3}$                & 480                        & 4                      & 5                           & 34                                                                       & 5                           \\
      $2^{-4}$                & 1984                       & 6                      & 7                           & 34                                                                       & 7                           \\
      $2^{-5}$                & 8064                       & 6                      & 9                           & 41                                                                       & 9                           \\
      $2^{-6}$                & 32512                      & 7                      & 11                          & 43                                                                       & 11                          \\
      $2^{-7}$                & 130560                     & 7                      & 14                          & 44                                                                       & 14                          \\
      $2^{-8}$                & 523264                     & 9                      & 17                          & 46                                                                       & 17                          \\
      $2^{-9}$                & 2095104                    & 9                      & 20                          & 50                                                                       & 21                          \\
      $2^{-10}$               & 8384512                    & 12                     & 27                          & 55                                                                       & 27                          \\
      \bottomrule
    \end{tabular}%
    \label{tab:exm1gmreseps1}%
  \end{table}%

  \begin{table}[htbp]
    \centering
    \caption{Iteration steps of the decoupled methods~\eqref{eq:4thpurbmfem0a}-\eqref{eq:4thpurbmfem0d} for Example~\ref{numericalexm1} with $\varepsilon=10^{-1}$ and different $h$}
    \begin{tabular}{cccccc}
      \toprule
      \multirow{2}[4]{*}{$h$} & \multirow{2}[4]{*}{\#dofs} & Eq.\eqref{mmwx0equiv1} & Eq.\eqref{eq:4thpurbmfem0a} & \multicolumn{1}{l}{Eq.\eqref{eq:4thpurbmfem0b}-\eqref{eq:4thpurbmfem0c}} & Eq.\eqref{eq:4thpurbmfem0d} \\
      \cmidrule{3-6}          &                            & steps                  & steps                       & steps                                                                    & steps                       \\
      \midrule
      $2^{-1}$                & 24                         & 1                      & 1                           & 26                                                                       & 1                           \\
      $2^{-2}$                & 112                        & 1                      & 3                           & 35                                                                       & 3                           \\
      $2^{-3}$                & 480                        & 4                      & 5                           & 39                                                                       & 5                           \\
      $2^{-4}$                & 1984                       & 6                      & 7                           & 50                                                                       & 7                           \\
      $2^{-5}$                & 8064                       & 6                      & 9                           & 57                                                                       & 9                           \\
      $2^{-6}$                & 32512                      & 7                      & 11                          & 74                                                                       & 11                          \\
      $2^{-7}$                & 130560                     & 7                      & 14                          & 74                                                                       & 14                          \\
      $2^{-8}$                & 523264                     & 9                      & 17                          & 78                                                                       & 17                          \\
      $2^{-9}$                & 2095104                    & 9                      & 20                          & 83                                                                       & 21                          \\
      $2^{-10}$               & 8384512                    & 12                     & 27                          & 83                                                                       & 27                          \\
      \bottomrule
    \end{tabular}%
    \label{tab:exm1gmreseps01}%
  \end{table}%

\end{example}

\begin{example}\label{numericalexm2}\rm 
  Next we verify the convergence of the MWX methods~\eqref{mmwx0} and~\eqref{mmwx} for problem \eqref{fourthorderpertub} with boundary layers.
  Let the exact solution of the Poisson equation~\eqref{poisson} be
  \[
    u^0(x,y)=\sin(\pi x)\sin(\pi y).
  \]
  Then the right hand term for both problems \eqref{fourthorderpertub} and \eqref{poisson} is set to be
  \[
    f (x, y) = -\Delta u^0 = 2\pi^2\sin(\pi x)\sin(\pi y).
  \]
  The explicit expression solution $u$ for problem \eqref{fourthorderpertub} with this right hand term is unknown.
  The solution $u$ possesses strong boundary layers when $\varepsilon$ is very small. Here we choose $\varepsilon=10^{-6}$. Errors $\|u^0-u_{h}\|_{0}$, $|u^0-u_{h}|_{1, h}$, $|u^0-u_{h}|_{2, h}$ and $\|u^0-u_{h}\|_{\varepsilon, h}$ of the discrete method~\eqref{mmwx0} for $\ell=1$ and $\ell=2$ are present in Table~\ref{ex2p1nopen} and Table~\ref{ex2p2nopen} respectively, from which we can see that $\|u^0-u_{h}\|_{0}=O(h^{1.5})$, $|u^0-u_{h}|_{1, h}=O(h^{0.5})$, $|u^0-u_{h}|_{2, h}=O(h^{-0.5})$ and $\|u^0-u_{h}\|_{\varepsilon, h}=O(h^{0.5})$. The numerical convergence rate of error $\|u^0-u_{h}\|_{\varepsilon, h}$ coincides with \eqref{eq:error3}.

  After applying the Nitsche's technique with the penalty constant $\sigma=5$, errors $\|u^0-u_{h}\|_{0}$, $|u^0-u_{h}|_{1, h}$, $\interleave u^0-u_{h}\interleave_{2, h}$ and $\interleave u^0-u_{h}\interleave_{\varepsilon, h}$ of the discrete method~\eqref{mmwx} for $\ell=1$ and $\ell=2$ are present in Table~\ref{ex2p1pen} and Table~\ref{ex2p2pen} respectively. When $\ell=1$, numerically $\|u^0-u_{h}\|_{0}=O(h^{2})$, $|u^0-u_{h}|_{1, h}=O(h^{1.5})$, $\interleave u^0-u_{h}\interleave_{2, h}=O(h^{0.5})$ and $\interleave u^0-u_{h}\interleave_{\varepsilon, h}=O(h^{1.5})$. All these convergence rates are optimal. And the convergence rates of $|u^0-u_{h}|_{1, h}$ and $\interleave u^0-u_{h}\interleave_{\varepsilon, h}$ are half order higher than the optimal rates, as indicated by \eqref{eq:error6}.
  For $\ell=2$, it is observed from Table~\ref{ex2p2pen} that  $\|u^0-u_{h}\|_{0}=O(h^{3})$, $|u^0-u_{h}|_{1, h}=O(h^{2})$, $\interleave u^0-u_{h}\interleave_{2, h}=O(h)$ and $\interleave u^0-u_{h}\interleave_{\varepsilon, h}=O(h^{2})$. Again all these convergence rates are optimal, and the convergence rate of $\interleave u^0-u_{h}\interleave_{\varepsilon, h}$ is in coincidence with \eqref{eq:error6}.

  \begin{table}[h]
    \centering
    \caption{Errors of the discrete method~\eqref{mmwx0} for Example~\ref{numericalexm2} with different $h$ when $\varepsilon=10^{-6}$ and $\ell=1$}
    \resizebox{\textwidth}{!}{
      \begin{tabular}{cccccccc}
        \toprule
        $h$                                                             & $2^{-1}$  & $2^{-2}$  & $2^{-3}$  & $2^{-4}$  & $2^{-5}$  & $2^{-6}$  & $2^{-7}$  \\
        \midrule
        \multicolumn{1}{c}{$\left\|u^0-u_{h}\right\|_{0}$}              & 2.550E-01 & 7.631E-02 & 2.576E-02 & 9.051E-03 & 3.189E-03 & 1.124E-03 & 3.968E-04 \\
                                                                        & $-$       & 1.74      & 1.57      & 1.51      & 1.51      & 1.50      & 1.50      \\
        \multicolumn{1}{c}{$\left|u^0-u_{h}\right|_{1, h}$}             & 1.871E+00 & 1.065E+00 & 6.781E-01 & 4.638E-01 & 3.240E-01 & 2.279E-01 & 1.607E-01 \\
                                                                        & $-$       & 0.81      & 0.65      & 0.55      & 0.52      & 0.51      & 0.50      \\
        \multicolumn{1}{c}{$\left|u^0-u_{h}\right|_{2, h}$}             & 1.142E+01 & 1.309E+01 & 1.807E+01 & 2.543E+01 & 3.580E+01 & 5.047E+01 & 7.124E+01 \\
                                                                        & $-$       & -0.20     & -0.46     & -0.49     & -0.49     & -0.50     & -0.50     \\
        \multicolumn{1}{c}{$\left\|u^0-u_{h}\right\|_{\varepsilon, h}$} & 1.616E+00 & 9.885E-01 & 6.523E-01 & 4.547E-01 & 3.208E-01 & 2.267E-01 & 1.603E-01 \\
                                                                        & $-$       & 0.71      & 0.60      & 0.52      & 0.50      & 0.50      & 0.50      \\
        \bottomrule
      \end{tabular}%
    }
    \label{ex2p1nopen}%
  \end{table}%

  \begin{table}[htbp]
    \centering
    \caption{Errors of the discrete method~\eqref{mmwx0} for Example~\ref{numericalexm2} with different $h$ when $\varepsilon=10^{-6}$ and $\ell=2$}
    \resizebox{\textwidth}{!}{
      \begin{tabular}{cccccccc}
        \toprule
        \multicolumn{1}{c}{$h$}                                         & $2^{-1}$  & $2^{-2}$  & $2^{-3}$  & $2^{-4}$  & $2^{-5}$  & $2^{-6}$  & $2^{-7}$  \\
        \midrule
        \multicolumn{1}{c}{$\left\|u^0-u_{h}\right\|_{0}$}              & 2.288E-01 & 7.595E-02 & 2.578E-02 & 9.000E-03 & 3.172E-03 & 1.121E-03 & 3.961E-04 \\
                                                                        & $-$       & 1.59      & 1.56      & 1.52      & 1.50      & 1.50      & 1.50      \\
        \multicolumn{1}{c}{$\left|u^0-u_{h}\right|_{1, h}$}             & 1.598E+00 & 1.002E+00 & 6.702E-01 & 4.630E-01 & 3.239E-01 & 2.279E-01 & 1.607E-01 \\
                                                                        & $-$       & 0.67      & 0.58      & 0.53      & 0.52      & 0.51      & 0.50      \\
        \multicolumn{1}{c}{$\left|u^0-u_{h}\right|_{2, h}$}             & 1.140E+01 & 1.389E+01 & 1.851E+01 & 2.561E+01 & 3.587E+01 & 5.049E+01 & 7.125E+01 \\
                                                                        & $-$       & -0.28     & -0.41     & -0.47     & -0.49     & -0.49     & -0.50     \\
        \multicolumn{1}{c}{$\left\|u^0-u_{h}\right\|_{\varepsilon, h}$} & 1.369E+00 & 9.258E-01 & 6.444E-01 & 4.540E-01 & 3.207E-01 & 2.267E-01 & 1.603E-01 \\
                                                                        & $-$       & 0.56      & 0.52      & 0.51      & 0.50      & 0.50      & 0.50      \\
        \bottomrule
      \end{tabular}%
    }
    \label{ex2p2nopen}%
  \end{table}%

  \begin{table}[htbp]
    \centering
    \caption{Errors of the discrete method~\eqref{mmwx} for Example~\ref{numericalexm2} with different $h$ when $\varepsilon=10^{-6}$ and $\ell=1$}
    \resizebox{\textwidth}{!}{
      \begin{tabular}{cccccccc}
        \toprule
        $h$                                                                   & $2^{-1}$  & $2^{-2}$  & $2^{-3}$  & $2^{-4}$  & $2^{-5}$  & $2^{-6}$  & $2^{-7}$  \\
        \midrule
        \multicolumn{1}{c}{$\left\|u^{0}-u_{h}\right\|_{0}$}                      & 2.234E-01 & 4.892E-02 & 9.360E-03 & 1.948E-03 & 4.436E-04 & 1.062E-04 & 2.601E-05 \\
                                                                              & -         & 2.19      & 2.39      & 2.26      & 2.13      & 2.06      & 2.03      \\
        \multicolumn{1}{c}{$\left|u^{0}-u_{h}\right|_{1, h}$}                     & 1.539E+00 & 5.597E-01 & 1.759E-01 & 5.513E-02 & 1.787E-02 & 5.983E-03 & 2.050E-03 \\
                                                                              & -         & 1.46      & 1.67      & 1.67      & 1.63      & 1.58      & 1.55      \\
        \multicolumn{1}{c}{$\interleave u^{0}-u_{h}\interleave_{2, h}$}           & 9.695E+00 & 6.714E+00 & 4.320E+00 & 2.801E+00 & 1.870E+00 & 1.279E+00 & 8.888E-01 \\
                                                                              & -         & 0.53      & 0.64      & 0.62      & 0.58      & 0.55      & 0.53      \\
        \multicolumn{1}{c}{$\interleave u^{0}-u_{h}\interleave_{\varepsilon, h}$} & 1.316E+00 & 5.108E-01 & 1.666E-01 & 5.318E-02 & 1.742E-02 & 5.877E-03 & 2.024E-03 \\
                                                                              & -         & 1.37      & 1.62      & 1.65      & 1.61      & 1.57      & 1.54      \\
        \bottomrule
      \end{tabular}%
    }
    \label{ex2p1pen}%
  \end{table}%

  \begin{table}[htbp]
    \centering
    \caption{Errors of the discrete method~\eqref{mmwx} for Example~\ref{numericalexm2} with different $h$ when $\varepsilon=10^{-6}$ and $\ell=2$}
    \resizebox{\textwidth}{!}{
      \begin{tabular}{cccccccc}
        \toprule
        \multicolumn{1}{c}{$h$}                                               & $2^{-1}$  & $2^{-2}$  & $2^{-3}$  & $2^{-4}$  & $2^{-5}$  & $2^{-6}$  & $2^{-7}$  \\
        \midrule
        \multicolumn{1}{c}{$\left\|u^{0}-u_{h}\right\|_{0}$}                      & 6.445E-02 & 7.345E-03 & 7.175E-04 & 7.567E-05 & 8.842E-06 & 1.083E-06 & 1.347E-07 \\
                                                                              & -         & 3.13      & 3.36      & 3.25      & 3.10      & 3.03      & 3.01      \\
        \multicolumn{1}{c}{$\left|u^{0}-u_{h}\right|_{1, h}$}                     & 4.834E-01 & 1.293E-01 & 3.343E-02 & 8.446E-03 & 2.115E-03 & 5.286E-04 & 1.321E-04 \\
                                                                              & -         & 1.90      & 1.95      & 1.98      & 2.00      & 2.00      & 2.00      \\
        \multicolumn{1}{c}{$\interleave u^{0}-u_{h}\interleave_{2, h}$}           & 6.369E+00 & 3.434E+00 & 1.765E+00 & 8.873E-01 & 4.434E-01 & 2.214E-01 & 1.106E-01 \\
                                                                              & -         & 0.89      & 0.96      & 0.99      & 1.00      & 1.00      & 1.00      \\
        \multicolumn{1}{c}{$\interleave u^{0}-u_{h}\interleave_{\varepsilon, h}$} & 4.190E-01 & 1.220E-01 & 3.271E-02 & 8.370E-03 & 2.106E-03 & 5.275E-04 & 1.319E-04 \\
                                                                              & -         & 1.78      & 1.90      & 1.97      & 1.99      & 2.00      & 2.00      \\
        \bottomrule
      \end{tabular}%
    }
    \label{ex2p2pen}
  \end{table}%
\end{example}


\end{document}